\theoremstyle{plain}
\newtheorem{theorem}{Theorem}[section]
\newtheorem{corollary}[theorem]{Corollary}
\newtheorem{lemma}[theorem]{Lemma}
\newtheorem{proposition}[theorem]{Proposition}
\theoremstyle{definition}
\newtheorem{definition}[theorem]{Definition}
\newtheorem{example}[theorem]{Example}
\newtheorem{remark}[theorem]{Remark}
\newcommand{\mathq}{\mathbb{Q}}
\newcommand{\mathz}{\mathbb{Z}}
\newcommand{\mathr}{\mathbb{R}}
\newcommand{\ord}{\text{ord}}
\renewcommand{\p@enumii}{}
\def\<#1>{\langle #1 \rangle}
\newcommand{\Ran}{\operatorname{Ran}}
\def\@setkeywords{%
  {\itshape \keywordsname}:\enspace \@keywords\@addpunct.}
\def\@setsubjclass{%
  {\itshape\subjclassname}:\enspace\@subjclass\@addpunct.}
\newcommand{\acr}{\newline\indent}
\title{P-adic metric preserving functions and their analogues}
\author{Robert W. Vallin}
\address{Robert W. Vallin\acr Department of Mathematics, Lamar University, Beaumont, TX 77710 USA}
\email{robert.vallin@lamar.edu}
\author{Oleksiy A. Dovgoshey}
\address{Oleksiy A. Dovgoshey\acr Functions Theory Department, Institute of Applied Mathematics and Mechanics of NASU, Dobrovolskogo str. 1, Slovyansk 84100, Ukraine}
\email{oleksiy.dovgoshey@gmail.com}
\begin{document}
\begin{abstract}
The $p$-adic completion \(\mathbb{Q}_p\) of the rational numbers induces a different absolute value $|\cdot|_p$ than the typical $| \cdot |$ we have on the real numbers. In this paper we compare and contrast functions \(f \colon \mathbb{R}^{+} \to \mathbb{R}^{+}\), for which the composition with the \(p\)-adic metric \(d_p\) generated by \(|\cdot|_p\) is still a metric on $\mathq_p$, with the usual metric preserving functions and the functions that preserve the Euclidean metric on $\mathr$. In particular, it is shown that \(f \circ d_p\) is still an ultrametric on \(\mathbb{Q}_p\) if and only if there is a function \(g\) such that \(f \circ d_p = g \circ d_p\) and \(g \circ d\) is still an ultrametric for every ultrametric \(d\). Some general variants of the last statement are also proved.
\end{abstract}

\keywords{ultrametric space, $p$-adic number, ultrametric preserving function, metric preserving function, poset}
\subjclass[2010]{54E35, 26A21}

\maketitle

\section{Introduction}

The \(p\)-adic numbers were first introduced in 1897 by K.~Hensel \cite{Hen1897JDM}\footnote{Hereinafter, the phrases such as ``It was first introduced in the year YYYY'' are abbreviations for ``As far as the authors known, it was first introduced in the year YYYY''.}. The basis for numerous applications of \(p\)-adic numbers and for algebraic studies of these numbers is the so-called \(p\)-adic valuation. The field of \(p\)-adic numbers \(\mathbb{Q}_p\) endowed with a metric \(d_p\) generated by \(p\)-adic valuation is also a fundamental example in the theory of ultrametric spaces. Nevertheless, many metric properties of the space \((\mathbb{Q}_p, d_p)\) remain unexplored now. This paper is an attempt to fill one of these gaps. The goal of the paper is to describe the structure of functions \(f\) for which the composition \(f \circ d_p\) remains an ultrametric or a metric on \(\mathbb{Q}_p\).

The paper is organized as follows. In Section~\ref{sec:2} we remained the concept of \(p\)-adic valuation and give some examples of calculating this quantity.

Section~\ref{sec:3} deals with metric preserving functions and ultrametric preserving ones with emphasis on functions which preserve the standard Euclidean metric on the real line.

Section~\ref{sec:4} contains our main results related to functions that preserve the \(p\)-adic metrics \(d_p\). Proposition~\ref{p1} describes the structure of all functions for which \(f \circ d_p\) is still a metric. Theorem~\ref{t3.7} explicitly establish a relationship between ultrametric preserving functions and functions \(f\) for which \(f \circ d_p\) is still an ultrametric. The characteristic properties of the last class functions are found in Proposition~\ref{p3.6}.

In Section~\ref{sec:5} using some concepts of order theory we generalize the known characterization of ultrametric preserving functions to the case of functions which preserve the ultrametrics of a given class of ultrametric spaces (Theorem~\ref{t5.6}) and find the necessary and sufficient conditions under which these functions behave similarly to functions preserving \(d_p\) (Theorem~\ref{t5.7}).

In what follows we will use the following designations
\begin{itemize}
\item \(\mathbb{P}\) --- the set of prime numbers \(p \geq 2\).
\item \(\mathbb{Q}\) --- the set (field) of rational numbers.
\item \(\mathbb{Q}_p\) --- the completion of \(\mathbb{Q}\) generated by \(p\)-adic valuation \(|\cdot|_p\).
\item \(\mathbb{R}\) --- the set (field) of real numbers.
\item \(\mathbb{R}^{+}\) --- the set (ordered set) of all non-negative real numbers.
\item \(\mathbb{Z}\) --- the set (ring) of all integer numbers.
\end{itemize}

\section{The $p$-adic Numbers}\label{sec:2}

Let $p$ be a prime number. We define a map, $|\cdot|_p$, on $\mathq$ by
$|0|_p = 0$ and
$$
|x|_p = p^{-\ord_px},
$$
where $\ord_px$, the order of $x$ with respect to $p$, is (a) the highest power of $p$ that divides $x$ if $x \in \mathz$ and (b) $\ord_pa - \ord_pb$ if $x = a/b$ where $a,b \in \mathz$, $b\ne 0$.

\begin{example}
For the number $25/18$, we have $|25/18|_2 = 2$, $|25/18|_3 = 9$, while $|25/18|_5 = 1/25$. For any other prime, $p$, $|25/18|_p = 1$. 
\end{example}

 We note that the range of the map $|\cdot|_p$ is the set $\{0\}\cup \{p^n\colon n \in \mathbb{Z} \}$ unlike the usual $|\cdot|$ on $\mathr$ whose values include all non-negative real numbers.

For fixed prime $p$ the map \(|\cdot|_p\) defines a norm on \(\mathbb{Q}\). Ostrowski's Theorem (see \cite{Koblitz}) implies every non-trivial absolute value on \(\mathbb{Q}\) is equivalent to either the usual absolute value $|\cdot |$ or a $p$-adic absolute value $|\cdot|_p$. The $p$-adic absolute value is a non-Archimedean norm as $|\cdot|_p$ has the property that
$$
|x + y|_p \le \max \{ |x|_p, |y|_p\}.
$$
This $p$-adic norm leads us to the \(p\)-adic metric on $\mathq$ defined by 
\[
d_p(x,y) = |x-y|_p.
\]
We actually have something stronger than a metric. Thanks to the non-Archimedean property \(d_p\) is an ultrametric. Rather than the ordinary Triangle Inequality, \(d_p\) satisfies the Strong Triangle Inequality
\[
d_p(x, y) \leq \max\{d_p(x, z), d_p(z, y)\}
\]
for all \(x\), \(y\), \(z \in \mathbb{Q}\). As shown below, Cauchy sequences do not behave the same way in with the $p$-adic norm as in the usual norm and $(\mathq, d_p)$ is not a complete metric space. There are many ways to prove the lack of completeness (see, for example, \cite{Katok} where a counting argument is presented).

The next example shows that sequences which are Cauchy in $\mathq$ with the standard Euclidean metric do not have to be Cauchy in $(\mathq, d_p)$. To prove this we use the following theorem.

\begin{theorem}
The sequence $\{a_n\}$ is Cauchy in $(\mathq, d_p)$ if only if
$$
\lim_{n\rightarrow \infty} |a_{n} - a_{n+1}|_p =0.
$$   
\end{theorem}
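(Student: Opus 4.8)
Since the statement is a biconditional, I would prove the two implications separately, and the forward direction is the trivial half: it holds in \emph{any} metric space. If $\{a_n\}$ is Cauchy in $(\mathq, d_p)$, then for each $\varepsilon > 0$ there is an $N$ with $d_p(a_m, a_n) < \varepsilon$ for all $m, n \ge N$; taking $m = n+1$ gives $|a_n - a_{n+1}|_p < \varepsilon$ for all $n \ge N$, which is exactly the asserted limit. No property of $d_p$ beyond its being a metric enters here.

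All the content sits in the reverse implication, where the ultrametric structure is essential---in the ordinary Archimedean metric on $\mathq$ the claim fails, as the partial sums of a divergent series with vanishing terms show. The plan is to lean on the Strong Triangle Inequality. First I would promote the two-term bound $|x+y|_p \le \max\{|x|_p,|y|_p\}$ to its finite-sum form $\bigl|\sum_{k=1}^{r} x_k\bigr|_p \le \max_{1 \le k \le r}|x_k|_p$ by an easy induction on $r$. The key step is then a telescoping estimate: for $m > n$ I would write $a_m - a_n = \sum_{k=n}^{m-1}(a_{k+1}-a_k)$ and apply the finite-sum inequality to get
$$
d_p(a_m, a_n) = |a_m - a_n|_p \le \max_{n \le k < m}|a_{k+1}-a_k|_p.
$$
Assuming $\lim_{n\to\infty}|a_n - a_{n+1}|_p = 0$ and fixing $\varepsilon > 0$, I would choose $N$ with $|a_{k+1}-a_k|_p < \varepsilon$ for all $k \ge N$; then for $m > n \ge N$ the maximum on the right ranges only over indices $k \ge N$, so $d_p(a_m, a_n) < \varepsilon$ and $\{a_n\}$ is Cauchy.

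The single idea that makes this work---and the only point requiring the ultrametric hypothesis---is the collapse of the finite-sum norm to the maximum of the summand norms. It is precisely this that forces a chain of consecutive jumps $|a_{k+1}-a_k|_p$ to be dominated by its largest single jump rather than accumulating, so that control of consecutive differences already controls all differences. I do not expect any genuine obstacle beyond verifying the finite-sum inequality; everything else is bookkeeping.
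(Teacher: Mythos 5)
Your proof is correct, and in fact the paper offers no proof of this theorem at all: it is stated as a known fact (of the kind found in \cite{Katok}) and used only to set up the subsequent examples, so there is no in-paper argument to compare against. Your argument is the standard one --- the forward direction valid in any metric space, and for the converse the telescoping decomposition $a_m - a_n = \sum_{k=n}^{m-1}(a_{k+1}-a_k)$ combined with the inductively extended strong triangle inequality, yielding $|a_m - a_n|_p \le \max_{n \le k < m}|a_{k+1}-a_k|_p$ --- and it is complete, modulo the trivial bookkeeping of the case $m = n$ and symmetry in $m$, $n$.
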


\begin{example}
The sequence \(\{a_n\}\), $a_n = 10^{-n}$ is {\em not} Cauchy in $(\mathq, d_p)$ for any prime $p$. If $p = 2,5$, $|10^{-n} - 10^{-m}|_p \rightarrow \infty$ while $n,m \rightarrow \infty$ and for any other $p$, $|a_n - a_{n+1}|_p = | 9 \cdot \frac{1}{10^{n+1}}|_p\ge 1/9$ hence not Cauchy by the theorem above.
\end{example}

\begin{example}\label{negone}
Fix $p \in \mathbb{P}$. The sequence whose terms are given by $a_n = \sum_{k=1}^n p^k$ is a Cauchy sequence in \((\mathbb{Q}, d_p)\). For $i < j$, we have
$$|a_j - a_i|_p = \left|\sum_{k=i+1}^j p^k \right|_p \le \max_{i+1\le k \le j} \{|p^k|_p\} = p^{-(i+1)}.$$
The limit is the number $L = 1/(1-p)$ since
	$$\left|  a_n - \frac{1}{1-p}  \right|_p =\left| \frac{1-p^{n+1}}{1-p} - \frac{1}{1-p} \right|_p = \left|  \frac{p^{n+1|}}{1-p} \right|_p = \frac{1}{p^{n+1}}.$$
\end{example}

With $p$ still fixed, we can complete $(\mathq, d_p)$ in the usual way.  Take the collection of all Cauchy sequences and place an equivalence relation on the set by $\{a_n\} \equiv \{b_n\}$ if $\lim_{n\rightarrow \infty}|a_n - b_n|_p = 0$. We make equivalence classes out of the collections of Cauchy sequences whose difference is a null sequence and the collection of equivalence classes forms the completion. This is the field of $p$-adic numbers, denoted $\mathq_p$.	

In \cite{Katok} see how to write every number in $\mathq_p$ uniquely as
$$
\cdots + d_4p^4+ d_3p^3+ d_2p^2 + d_1p + d_0 + \dfrac{d_{-1}}{p}+ \cdots + \dfrac{d_{-j+1}}{p^{j-1} } +  \dfrac{d_{-j}}{p^j}
$$
where $d_k \in \{0,1,2,\ldots,p-1\}$. Thus, every member of $\mathq_p$ can be written as
$$
\ldots d_kd_{k-1}\ldots d_1d_0.d_{-1}d_{-2} \ldots d_{-j}.
$$

\begin{example}
In the field of \(3\)-adic numbers $\mathq_3$
\begin{itemize}
\item The number seventeen is written as $122.$ 
\item The additive inverse of one is written as $ \ldots 2222222.$ We can show this using Example \ref{negone} with $p-1$ in place of $p$ in that example.
\item The multiplicative inverse of two is written as $ \ldots 1111112.$ Note that this says $1/2$ is a $3$-adic integer. 
\end{itemize}
\end{example}

\section{Metric Preserving Functions}\label{sec:3}

We now turn our attention to functions that preserve metrics.

\begin{definition}
A function $f: \mathr^+ \rightarrow \mathr^+$ is called metric preserving (ultrametric preserving) if for every metric (ultrametric) space $(X, \rho)$ the composition $f \circ \rho$ is still a metric (ultrametric) on~$X$.
\end{definition}

The canonical example of a metric preserving function is $f(x) = \frac{x}{1+x}$.

It is straightforward to see that one requirement is $f\colon \mathr^+ \rightarrow \mathr^+$ must be \emph{amenable}, i.e., \(f(0) = 0\) and \(f(x) > 0\) for every \(x > 0\).  For every metric space \((X, \rho)\) and any $x,y \in X$, the facts that $f(\rho (x,y)) \ge 0$ and  $f(\rho(x,y)) = f(\rho(y,x))$ are automatically satisfied. The other requirement is to make sure that the triangle inequality still holds up under composition with $f$. There are several ways to make sure this happens. 

\begin{example}
Let $f:\mathr^+ \rightarrow \mathr^+$ be amenable.
\begin{itemize}
\item If there exists an $a>0$ such that for $x>0$, $a \le f(x) \le 2a$, then $f$ is a metric preserving function. 
\item If $f$ is concave down, then $f$ is metric preserving.
\item If $a\le b+c$ implies $f(a) \le f(b) + f(c)$, then $f$ is metric preserving (Wilson's Theorem).
\end{itemize}
\end{example}

For general metric spaces, the structure of metric preserving functions has been firstly studied by W.~Wilson in 1935 \cite{Wil1935AJM} and systematically discussed by J.~Dobo\v{s} \cite{Dob1998}. Recent decades, the properties of metric preserving functions investigated by many mathematicians. In particular, the algebraic structure of the semigroup generated by metric preserving functions was firstly investigated by O.~Dovgoshey and O.~Martio in 2013~\cite{DM2013BazAuG}. The study of ultrametric preserving functions begun by P.~Pongsriiam and I.~Termwuttipong in 2014~\cite{PT}. 

\begin{remark}\label{r3.3}
The metric preserving functions can be considered as a special case of metric products (= metric preserving functions of several variables). See, for example, \cite{BD1981MS, BFS2003BazAuG, DPK2014MS, FS2002AG, HM1991CM}. It is interesting to note that an important special class of ultrametric preserving functions of two variables was first considered in 2009~\cite{DM2009RRMPA}.
\end{remark}

Theorem~19 of paper~\cite{PT} can be formulated as

\begin{lemma}\label{l1}
The following statements are equivalent for every \(\Psi \colon \mathbb{R}^{+} \to \mathbb{R}^{+}\):
\begin{enumerate}
\item [\((i)\)] We have \(\Psi(0) = 0\) and the double inequality 
\[
0 < \Psi(a) \leq 2\Psi(b)
\]
holds whenever \(0 < a < b\).
\item [\((ii)\)] The mapping \(X \times X \xrightarrow{d} \mathbb{R}^{+} \xrightarrow{\Psi} \mathbb{R}^{+}\) is a metric for every ultrametric space \((X, d)\).
\end{enumerate}
\end{lemma}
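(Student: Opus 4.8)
The plan is to prove the two implications separately, using throughout the \emph{isosceles property} of ultrametric spaces: for any three points, the largest of the three pairwise distances is attained at least twice. This follows directly from the strong triangle inequality, for if two of the distances were distinct and both strictly below the third, one of the strong triangle inequalities would fail.

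For $(ii) \Rightarrow (i)$ I would recover each condition by testing $\Psi \circ d$ on a suitable small ultrametric space. Evaluating $\Psi \circ d$ on the diagonal of any nonempty ultrametric space gives $\Psi(0) = 0$. Since every two-point space is trivially an ultrametric, taking $X = \{x,y\}$ with $d(x,y) = a$ and using that a metric separates points forces $\Psi(a) > 0$ for all $a > 0$. For the inequality $\Psi(a) \le 2\Psi(b)$ with $0 < a < b$, I would use the three-point space $X = \{x,y,z\}$ with $d(x,y) = d(y,z) = b$ and $d(x,z) = a$; a one-line check of the strong triangle inequality confirms it is an ultrametric, and the ordinary triangle inequality for the metric $\Psi \circ d$ applied to $(x,z,y)$ reads precisely $\Psi(a) \le \Psi(b) + \Psi(b)$.

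For $(i) \Rightarrow (ii)$, non-negativity and symmetry of $\Psi \circ d$ are inherited from $d$, and $\Psi(0) = 0$ together with the positivity $\Psi(a) > 0$ for $a > 0$ (which follows from $(i)$ on choosing any $b > a$) shows that $\Psi \circ d$ separates points. The substance is the triangle inequality $\Psi(d(x,z)) \le \Psi(d(x,y)) + \Psi(d(y,z))$. Writing $a = d(x,z)$, $b = d(x,y)$, $c = d(y,z)$, I would split on the position of $a$. The case $a = 0$ is trivial, so suppose $a > 0$. If $a$ equals $\max\{a,b,c\}$, then by the isosceles property one of $b, c$ also equals $a$, so $\Psi(a)$ coincides with one of $\Psi(b), \Psi(c)$ and the inequality is immediate. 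If instead $a < \max\{a,b,c\}$, then the two largest distances are $b$ and $c$, forcing $b = c > a$, and hypothesis $(i)$ gives $\Psi(a) \le 2\Psi(b) = \Psi(b) + \Psi(c)$.

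The main obstacle is the triangle inequality in $(i) \Rightarrow (ii)$, and specifically keeping the case analysis honest. The hypothesis $0 < \Psi(a) \le 2\Psi(b)$ constrains $\Psi$ only when its left-hand argument is \emph{strictly} smaller than the right-hand one, so it cannot be applied when $a$ is a maximal side; those configurations, together with the degenerate ones where $a = 0$ or a point coincides with another, have to be dispatched directly. What makes every case collapse cleanly is the isosceles property, which turns the strong triangle inequality into the rigid statement that the maximum among $a,b,c$ is repeated, leaving exactly the two configurations treated above.
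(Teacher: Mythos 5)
Your proof is correct, but one point of comparison is moot: the paper does not actually prove Lemma~\ref{l1} --- it is quoted as Theorem~19 of \cite{PT} --- so there is no internal proof to measure your argument against, and your write-up supplies a self-contained proof where the paper only cites. Your route is the standard one and matches the machinery the paper uses elsewhere: the ``isosceles property'' you invoke is exactly the statement that every distance triple of an ultrametric space is a strong triangle triplet in the sense of \(\Delta_\infty\), and your two non-degenerate cases (the tested side is the repeated maximum, versus the other two sides equal and strictly larger) reproduce the content of Statement~\ref{t3.8:s2} of Theorem~\ref{t3.8}; the three-point space with distances \((a,b,b)\), \(0<a<b\), used in \((ii)\Rightarrow(i)\) is precisely the extremal configuration that makes the constant \(2\) sharp, and your check that it is an ultrametric is right. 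One slip of wording only: in justifying the isosceles property you say it would fail ``if two of the distances were distinct and both strictly below the third''; distinctness of the two smaller distances is irrelevant --- the correct formulation is that if the maximum were attained only once, the strong triangle inequality with the maximal distance on the left would read \(a \leq \max\{b,c\} < a\), a contradiction. This does not affect your argument, since in the case analysis you use the property correctly: in particular \(a < \max\{a,b,c\}\) does force \(b = c\), because the maximum must be attained at least twice and \(a\) does not attain it.
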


The dual to this lemma result was obtained in \cite{Dov2020MS}.

The following theorem, from \cite{PT} gives us an easy and complete description of ultrametric preserving functions.

\begin{theorem}\label{t2.4}
A function \(f \colon \mathbb{R}^{+} \to \mathbb{R}^{+}\) is ultrametric preserving if and only if \(f\) is increasing and amenable.
\end{theorem}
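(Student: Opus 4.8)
The plan is to prove the two implications separately, treating the defining properties of an ultrametric (the vanishing/separation condition, symmetry, and the strong triangle inequality) one at a time. Symmetry is never an issue, since $f\circ\rho$ inherits it from $\rho$ for any $f$ whatsoever; all the content lies in the separation condition and the strong triangle inequality.

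For sufficiency, suppose $f$ is increasing and amenable, and let $(X,\rho)$ be an arbitrary ultrametric space. Amenability gives $f(\rho(x,y))=0$ exactly when $\rho(x,y)=0$, i.e.\ exactly when $x=y$, so $f\circ\rho$ separates points. For the strong triangle inequality I would use the elementary observation that an increasing $f$ satisfies $f(\max\{s,t\})=\max\{f(s),f(t)\}$; applying $f$ to the inequality $\rho(x,y)\le\max\{\rho(x,z),\rho(z,y)\}$ and invoking monotonicity then yields $f(\rho(x,y))\le\max\{f(\rho(x,z)),f(\rho(z,y))\}$, which is precisely what is needed.

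For necessity, suppose $f$ is ultrametric preserving. Amenability follows by testing $f$ on two-point spaces: for any $a>0$ the set $\{x,y\}$ with $\rho(x,y)=a$ is trivially an ultrametric space, and the requirement that $f\circ\rho$ separate points forces $f(0)=0$ and $f(a)>0$. The substantive step is monotonicity. Given $0<a<b$, I would build the three-point space $X=\{x,y,z\}$ with $\rho(x,y)=a$ and $\rho(x,z)=\rho(z,y)=b$; a quick check of the three instances of the strong triangle inequality confirms this is an ultrametric (it is the isosceles configuration characteristic of ultrametrics). Applying the strong triangle inequality for $f\circ\rho$ to the triple $(x,z,y)$ gives $f(a)=f(\rho(x,y))\le\max\{f(\rho(x,z)),f(\rho(z,y))\}=f(b)$, and the remaining case $a=0$ is covered by amenability. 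Hence $f$ is increasing.

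The main obstacle is the monotonicity direction: one must realize that a single well-chosen ultrametric space can encode the inequality $f(a)\le f(b)$. The isosceles three-point space is exactly the right gadget, and once it is in hand the conclusion is immediate; the danger is trying to extract monotonicity from the strong triangle inequality abstractly rather than exhibiting a concrete space that realizes the two prescribed distances.
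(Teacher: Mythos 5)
Your proof is correct, but it takes a different route from the one the paper itself relies on. The paper does not prove Theorem~\ref{t2.4} directly: it quotes the result from \cite{PT} and later remarks that it ``simply follows from Theorem~\ref{t5.6}'' by taking \(U\) to be the class of all non-empty ultrametric spaces. Your argument is instead the direct, elementary proof (essentially that of \cite{PT}): sufficiency via the identity \(f(\max\{s,t\}) = \max\{f(s), f(t)\}\) for increasing \(f\), and necessity via two-point spaces (amenability) together with the isosceles three-point space \(\rho(x,y)=a\), \(\rho(x,z)=\rho(z,y)=b\) (monotonicity); all three steps check out, including the verification that the isosceles configuration with \(0<a<b\) really satisfies the strong triangle inequality. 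It is worth seeing how your gadget relates to the paper's machinery: the isosceles triple is exactly the building block abstracted into the relation \(G_U\) of Definition~\ref{d4.3} --- a pair \(\langle s,t\rangle\) lies in \(G_U\) precisely when some space in \(U\) realizes \(s\) as the base and \(t\) as the legs of such a triple. When \(U\) is the class of all ultrametric spaces, every pair \(s \leq t\) is so realized (by your construction, plus the degenerate case \(s=0\)), so \(\Ran_U = \mathbb{R}^{+}\) and the induced order \({\preccurlyeq_U}\) coincides with \(\leq\); Theorem~\ref{t5.6}'s condition ``amenable and isotone from \((\Ran_U, {\preccurlyeq_U})\) to \((\mathbb{R}^{+}, \leq)\)'' then specializes to ``amenable and increasing.'' What your direct route buys is brevity and self-containment. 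What the paper's route buys is generality: for restricted classes \(U\) --- say \(U = \{(\mathbb{Q}_p, d_p)\}\) as in Proposition~\ref{p3.6}, or the space of Example~\ref{ex4.1} --- not every pair \(s \leq t\) is realized by an isosceles triple, \({\preccurlyeq_U}\) is strictly weaker than \(\leq\), and preserving functions need not be increasing, which is precisely the phenomenon Sections~\ref{sec:4} and \ref{sec:5} are built to capture.
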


This theorem was generalized by O.~Dovgoshey in 2019 (see Proposition~3.24 and Corollary~3.25 \cite{Dov2019a}) to the special case of the so-called ultrametric distances. The ultrametric distances were first introduced by S.~Priess-Crampe and P.~Ribenboim in 1993 \cite{PR1993AMSUH} and studied in \cite{PR1996AMSUH, PR1997AMSUH, Rib1996PMH, Rib2009JoA}.

Also in  \cite{PT} one finds
\begin{corollary}\label{c2.5}
The following statements hold for every $f:\mathr^+\rightarrow \mathr^+$:
\begin{enumerate}
\item If $f$ is ultrametric preserving and subadditive, then $f$ is metric preserving.
\item If $f$ is metric preserving and increasing on $\mathbb{R}^{+}$, then $f$ is ultrametric preserving. 
\end{enumerate}
\end{corollary}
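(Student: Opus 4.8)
The plan is to derive both statements directly from Theorem~\ref{t2.4}, which identifies the ultrametric preserving functions as precisely the increasing amenable ones, in combination with Wilson's Theorem recorded in the Example above.

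For statement (i), I would begin by applying Theorem~\ref{t2.4}: since $f$ is ultrametric preserving, it is both increasing and amenable. To upgrade this to metric preservation I would verify the hypothesis of Wilson's Theorem, namely that $a \le b + c$ implies $f(a) \le f(b) + f(c)$ for all $a, b, c \in \mathbb{R}^{+}$. Given $a \le b + c$, monotonicity of $f$ yields $f(a) \le f(b+c)$, while subadditivity yields $f(b+c) \le f(b) + f(c)$; chaining these two inequalities produces exactly the required bound. Because $f$ is also amenable, Wilson's Theorem then concludes that $f$ is metric preserving.

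For statement (ii), I would invoke the observation already recorded in this section that every metric preserving function must be amenable (symmetry and non-negativity of $f \circ \rho$ being automatic, while $f(0)=0$ and positivity on $(0,\infty)$ are forced in order that $f \circ \rho$ separate points and vanish on the diagonal). Thus $f$ is amenable, and since $f$ is increasing by hypothesis, Theorem~\ref{t2.4} immediately gives that $f$ is ultrametric preserving.

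The only genuine obstacle lies in part (i), where one must notice that monotonicity together with subadditivity is exactly the conjunction needed to feed Wilson's condition. Part (ii) collapses to a one-line deduction once the built-in amenability of metric preserving functions is acknowledged. No delicate estimates or constructions of auxiliary metric spaces are required: the whole argument amounts to assembling three ingredients, namely Theorem~\ref{t2.4}, Wilson's Theorem, and the amenability forced by metric preservation.
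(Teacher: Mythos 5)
Your proposal is correct and takes essentially the intended route: part (i) chains monotonicity (from Theorem~\ref{t2.4}, which makes $f$ increasing and amenable) with subadditivity to get $f(a) \le f(b+c) \le f(b) + f(c)$ whenever $a \le b+c$, which is exactly Wilson's criterion, and part (ii) combines the amenability forced on every metric preserving function with the monotonicity hypothesis to invoke Theorem~\ref{t2.4} directly. Note that the paper itself states this corollary without proof, quoting it from \cite{PT}, and your assembly of Theorem~\ref{t2.4}, Wilson's Theorem, and forced amenability is the standard argument behind it.
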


A way to see that a function is metric preserving, which we will make use of later on, involves the concept of a {\it triangle triplet} (see \cite{BD} and \cite{Corazza}). Let $(a,b,c)$ be a triple of three non-negative, real numbers. We say that \((a, b, c)\) is a triangle triplet if
$$
a \le b + c, \quad b \le a + c, \quad \text{and} \quad c \le a + b
$$
all hold.  We will write this as $(a,b,c) \in \Delta$. We say that $(a,b,c)$ is a \emph{strong triangle triplet}, $(a,b,c) \in \Delta_\infty$, if 
\[
a \le \max\{b, c\}, \quad b \le \max\{a, c\}, \quad \text{and}\quad c \le \max\{a, b\}
\]
(see \cite{PT}).

\begin{theorem}\label{t3.8}
Let $f: \mathr^+ \rightarrow \mathr^+$ be amenable. Then the following statements hold:
\begin{enumerate}
\item\label{t3.8:s1} $f$ is metric preserving if and only if $(f(a), f(b), f(c))$ is a triangle triplet whenever $(a,b,c)$ is one. 
\item\label{t3.8:s2} $f$ transfers ultrametrics to metrics if and only if $(f(a), f(b), f(c)) \in \Delta$ holds whenever $(a,b,c) \in \Delta_\infty$.
\item\label{t3.8:s3} $f$ is ultrametric preserving if and only if $(f(a), f(b), f(c)) \in \Delta_\infty$ holds whenever $(a,b,c) \in \Delta_\infty$.
\end{enumerate}
\end{theorem}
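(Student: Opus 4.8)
The plan is to derive all three equivalences from a single elementary principle about the three mutual distances of a triple of points, handling the ``if'' and ``only if'' directions uniformly. The principle has two halves. First, in any metric space the triple $\rho(x,y),\rho(y,z),\rho(x,z)$ is, by the very triangle inequality, a member of $\Delta$, and in any ultrametric space, by the strong triangle inequality, it is a member of $\Delta_\infty$. Second, conversely, any triple $(a,b,c)$ of positive reals in $\Delta$ is realized as the mutual distances of a three-point metric space, and any such triple in $\Delta_\infty$ is realized as a three-point ultrametric space: put $X=\{1,2,3\}$ with $\rho(1,2)=a$, $\rho(2,3)=b$, $\rho(1,3)=c$, and observe that the metric (respectively ultrametric) axioms for $\rho$ reduce exactly to $(a,b,c)\in\Delta$ (respectively $(a,b,c)\in\Delta_\infty$).

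For the three sufficiency (``if'') directions I would argue identically. Assuming the stated triplet condition, take an arbitrary space of the source type together with an arbitrary triple of points $x,y,z$. Amenability of $f$ at once gives that $f\circ\rho$ is non-negative, symmetric, and vanishes precisely on the diagonal, so only the relevant triangle-type inequality remains to be checked. Writing $a,b,c$ for the three mutual distances, the first half of the principle places $(a,b,c)$ in $\Delta$ (for (i)) or in $\Delta_\infty$ (for (ii) and (iii)); the hypothesis then puts $(f(a),f(b),f(c))$ in $\Delta$ (for (i) and (ii)) or in $\Delta_\infty$ (for (iii)), which is precisely the inequality $f(a)\le f(b)+f(c)$ and its two cyclic mates, respectively $f(a)\le\max\{f(b),f(c)\}$ and its mates. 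Hence $f\circ\rho$ is a metric in (i) and (ii) and an ultrametric in (iii).

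For the three necessity (``only if'') directions I would again argue uniformly. Assuming that $f$ preserves or transfers the structure in question, take a triple $(a,b,c)$ in $\Delta$ for (i) or in $\Delta_\infty$ for (ii) and (iii). When $a,b,c$ are all positive, the second half of the principle supplies a three-point metric (respectively ultrametric) space whose mutual distances are $a,b,c$; applying the hypothesis makes $f\circ\rho$ a metric (for (i) and (ii)) or an ultrametric (for (iii)), and reading off its three triangle-type inequalities for the points $1,2,3$ shows exactly that $(f(a),f(b),f(c))$ lies in $\Delta$ or in $\Delta_\infty$, as required.

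The one point needing genuine care—and the only real obstacle—is the degenerate case in which some entry vanishes, since then the three-point space collapses and cannot be used. This I would dispatch by direct computation rather than by a construction: if, say, $a=0$, then membership of $(a,b,c)$ in $\Delta$ (or in $\Delta_\infty$) forces $b\le c$ and $c\le b$, hence $b=c$, and then $(f(0),f(b),f(b))=(0,f(b),f(b))$ lies trivially in both $\Delta$ and $\Delta_\infty$ by amenability. Thus degenerate triples never require a space, and once they are handled the theorem reduces to the bookkeeping of matching the defining inequalities of metrics and ultrametrics against membership in $\Delta$ and $\Delta_\infty$.
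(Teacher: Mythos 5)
Your proof is correct and is essentially the standard argument: the paper gives no proof of Theorem~\ref{t3.8} at all, deferring to \cite{Corazza} for statement \ref{t3.8:s1} and to \cite{PT} for statements \ref{t3.8:s2} and \ref{t3.8:s3}, and those references proceed exactly as you do, realizing each triple of positive reals in $\Delta$ (resp.\ $\Delta_\infty$) as the mutual distances of a three-point metric (resp.\ ultrametric) space and reading the triangle-type inequalities back and forth, with amenability supplying positivity, symmetry, and vanishing on the diagonal. Your explicit handling of the degenerate case (a zero entry forces the other two entries to be equal, after which amenability settles the claim directly) correctly closes the one gap the three-point construction leaves, since such triples cannot be realized by three distinct points.
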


Statement~\ref{t3.8:s1} of Theorem~\ref{t3.8} is a reformulation of a corresponding result from~\cite{Corazza}. See also~\cite{PT} for Statements~\ref{t3.8:s2} and \ref{t3.8:s3} of this theorem.

An easy to prove consequence of the definition of triangle triplets is the following:
\begin{itemize}
\item [] Let $x_0 \in \mathr^{+}$ and let $f: \mathr^+ \rightarrow \mathr^+$ be metric preserving. If $f(x_0) = a$, then $f(x) \ge a/2$ for $x>x_0$. 
\end{itemize}
Other interesting properties of metric preserving functions include if $f$ is metric preserving and continuous at zero, then $f$ must be continuous everywhere.  

There are several pathological examples involving metric preserving function including a continuous, nowhere differentiable metric preserving function and a singular metric preserving function. In particular, since the (expanded) Cantor function is subadditive \cite{Dob1996PotAMS}, amenable, and increasing on~\(\mathbb{R}^{+}\), it is metric preserving by Theorem~\ref{t2.4} and Statement~\((i)\) of Corollary~\ref{c2.5}.

\medskip

If we focus on functions that preserve only the Euclidean metric on $\mathr$ things change. By restricting the space that is preserved we have an enlarged the set of functions we have on hand. 

Paper \cite{Dobos} shows that 
$$
f(x) = \sin (x) + \sin(\sqrt{2} \cdot x)
$$
is an example of function $f$ that preserves the Euclidean metric on \(\mathbb{R}^{+}\), but has the property that $\liminf_{x\rightarrow \infty} f(x) = 0$, hence is not metric preserving. Vallin, in \cite{Vallin}, expanded on this and created an $f$ that preserved the Euclidean metric on \(\mathbb{R}^{+}\), has $\liminf_{x\rightarrow \infty} f(x) = 0$, yet is unbounded, thus answering a question from Dobo\v{s} (``Must all functions that preserving the Euclidean metric and have $\liminf_{x\rightarrow \infty} f(x) = 0$ be almost periodic?'') in the negative.

\medskip

Triangle triplets can be used to test a function for preserving $(\mathr, |\cdot|)$ as follows:

\begin{theorem}
Let $f:\mathr^+ \rightarrow \mathr^+$. Then $f$ preserves the Euclidean metric on $\mathr$ if \(f\) is amenable and \((f(a), f(b), f(a+b))\) is a triangle triplet for all \(a\), \(b \ge 0\).
\end{theorem}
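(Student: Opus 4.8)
\section*{Proof proposal}

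The plan is to verify directly that $\sigma := f \circ |\cdot|$, given by $\sigma(x,y) = f(|x-y|)$, satisfies the metric axioms on $\mathr$. Amenability disposes of two of them at once: since $f(0)=0$ and $f(t)>0$ for $t>0$, we have $\sigma(x,y)\ge 0$ with $\sigma(x,y)=0$ exactly when $x=y$, and $\sigma(x,y)=\sigma(y,x)$ is immediate from $|x-y|=|y-x|$. Thus the entire content of the theorem lies in the triangle inequality $f(|x-z|)\le f(|x-y|)+f(|y-z|)$ for arbitrary $x,y,z\in\mathr$.

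The key step, and the feature that distinguishes $(\mathr,|\cdot|)$ from a general metric space, is that any three points of the real line are collinear. Writing $a=|x-y|$, $b=|y-z|$, $c=|x-z|$, this forces the largest of $a,b,c$ to equal the sum of the other two. First I would record this elementary fact by ordering $x,y,z$ and distinguishing whether $y$ lies between $x$ and $z$ (giving $c=a+b$) or not (giving $c=|a-b|$, so that the largest among $a,b,c$ is again the sum of the two smaller). Having established it, I relabel the two smaller distances as $s$ and $t$, so that the largest equals $s+t$.

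Now I would apply the hypothesis to the pair $(s,t)$: the triple $(f(s),f(t),f(s+t))$ lies in $\Delta$. Since membership in $\Delta$ is invariant under permutation of the three entries, and $\{f(a),f(b),f(c)\}$ coincides as a multiset with $\{f(s),f(t),f(s+t)\}$, it follows that $(f(a),f(b),f(c))\in\Delta$. In particular $f(c)\le f(a)+f(b)$, which is precisely the required inequality $f(|x-z|)\le f(|x-y|)+f(|y-z|)$. Combined with the two axioms already handled by amenability, this shows $\sigma$ is a metric, i.e.\ that $f$ preserves the Euclidean metric on $\mathr$.

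I do not expect a genuine obstacle: once the collinearity reduction is in place the argument is short. The only point needing a little care is the bookkeeping of the degenerate cases in which $y$ coincides with $x$ or $z$, equivalently $s=0$ or $t=0$; there one simply checks that a triple of the form $(0,f(b),f(b))$ trivially lies in $\Delta$, so the reasoning goes through uniformly. I would also be careful to invoke the full permutation-invariance of the triangle-triplet condition, since that is what lets a single application of the hypothesis cover every choice of which of the three points plays the role of the midpoint.
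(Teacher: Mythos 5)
Your proof is correct. The paper states this theorem without any proof, and your argument is exactly the intended one: amenability handles all axioms except the triangle inequality, collinearity of any three points of $\mathbb{R}$ forces the largest of the three mutual distances $a,b,c$ to equal the sum of the other two, and then the hypothesis applied to that pair, together with the permutation invariance of the triangle-triplet condition, yields $(f(a),f(b),f(c))\in\Delta$ and hence the triangle inequality; your handling of the degenerate cases is also fine, since the hypothesis with $a=0$ directly gives the (trivially valid) triplet $(0,f(b),f(b))$.
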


\section{Preserving $p$-adic Metrics}\label{sec:4}

Let us start from the basic for us

\begin{definition}\label{d3.1}
Let \(p \in \mathbb{P}\). A function \(f \colon \mathbb{R}^{+} \to \mathbb{R}^{+}\) is said to be \(p\)-adic metric (ultrametric) preserving if the composition \(f \circ d_p\) is a metric (an ultrametric) on \(\mathbb{Q}_p\).
\end{definition}

Given that the functions that preserve the Euclidean metric on \(\mathbb{R}\) have such different properties/examples than metric preserving functions. It seems logical to look at \(p\)-adic metric preserving functions. This paper does so with a collection of examples.

Having a discrete range makes a difference in preserving functions. Let us look at a straightforward example function defined on $\{0\} \cup \{p^n\}$  given by
\begin{center}
$f^*(0) = 0$, $f^*(p^n) = q^n$ for all integers $n$
\end{center}
where $p$, $q \in \mathbb{P}$ are fixed. We can see that $f^*\circ d_p$ changes things on $\mathq$ from the metric for $\mathq_p$ to the one for $\mathq_q$. One easy way to compose functions is to change the range from that of $|\cdot|_p$ to that of $|\cdot|_q$.

Since $\{p^n: p \in \mathbb{P} \text{ and } n \in \mathbb{Z}\}$ has only the origin as a limit point we can modify $f^*$ to $F^*$ that is  defined for all powers of all primes using $F^*(p^n) = q^n$. This function is only defined on a discrete space, so now we extend the function to be defined on all of $\mathr^+$ and continuous on the positive numbers. We can do this by requiring $F^*$ to be linear between consecutive positive points where it was previously defined. So now we have an amenable function $F^{*} \colon \mathr^+ \to \mathr^+$ that preserves all $p$-adic metrics but need {\em not} be a metric preserving function. This happens naturally if $q \ne 2$ as $(1/2^{n+1}, 1/2^{n+1}, 1/2^n)$ is a triangle triplet along with $c = a+b$, but for $q \ge 3$
$$
\frac{1}{q^{n+1}} + \frac{1}{q^{n+1}} = \frac{2}{q^{n+1}} < \frac{1}{q^n}
$$
since $2 < q$. 

This $F^*$ is not continuous at $0$. As the choice of prime $p$ increases, $1/p$ tends to zero while $F^*(1/p) = 1/q > 0$ stays constant. This example can be changed to a continuous function by ordering the primes $2=p_1<p_2<\cdots$ and having
$$
F(0) = 0, \, \,  F(p_k^n) = p_{k+1}^n, \text{ and $F$ linear elsewhere.}
$$
So we have a continuous $F$ that preserves $(\mathbb{Q}_p, d_p)$ for all $p \in \mathbb{P}$, but does not preserve the Euclidean metric on \(\mathbb{R}\). 

Our next example goes the other way; this function preserves the Euclidean metric on \(\mathbb{R}\), but not all \(p\)-adic metric. Specifically, our function $f$ will satisfy $(f(a), f(b), f(a+b))$ is a triangle triplet, but will not preserve the \(3\)-adic metric. 

\begin{example}\label{e3.1}
We define $f \colon \mathbb{R}^{+} \to \mathbb{R}^{+}$ piecewise as
$$
f(x) = \begin{cases}
x, & \text{if } 0 \le x \le 1 \\[7pt]
2-x, & \text{if } 1 \le x \le 3/2 \\[7pt]
\dfrac{3}{4}x - \dfrac{5}{8}, & \text{if } 3/2 \le x \le 2 \\[7pt]
-\dfrac{3}{4}x  + \dfrac{19}{8}, & \text{if } 2 \le x \le 3 \\[7pt]
\dfrac{3}{4} x - \dfrac{17}{8}, & \text{if } 3 \le x \le 7/2 \\[7pt]
\dfrac{1}{2}, & \text{otherwise}.
\end{cases}
$$
(See Figure~\ref{fig1}). This preserves the Euclidean metric on \(\mathbb{R}\) as it passes the $(a, b, a+b)$ to triangle triplets. The proof of this is similar to the proof used in \cite{Vallin}. The key is that the magnitude of the slopes are $1$ from $x = 0$ to $x = 3/2$ and then $3/4$, so smaller, and after $x = 7/2$ the $y$-values are constant at half the maximum so triangle triplets will be satisfied. However, $|1/2 - 1/3|_3 = 3$, $|1/3 - 1/4|_3 = 3$ and $|1/2 - 1/4|_3 = 1$, but $f(|1/2 - 1/3|_3) = 1/8$, $f(|1/3 - 1/4|_3) = 1/8$ and $f(|1/2 - 1/4|_3) = 1$ which will not satisfy the triangle inequality.
\end{example}

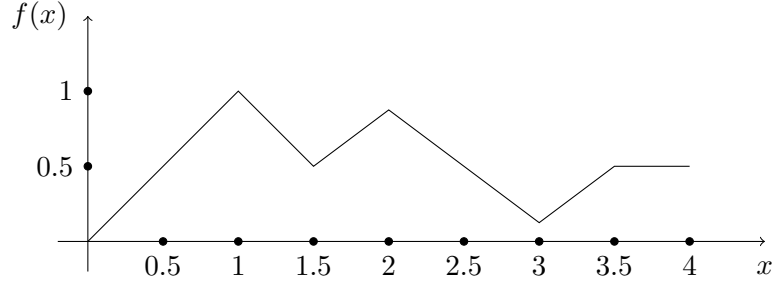
\begin{figure}
\begin{tikzpicture}
\def\xx{2cm}
\node [label=left:$f(x)$] at (0, 1.5*\xx) {};
\node [label=below:$x$] at (4.5*\xx, 0) {};
\node [draw, fill=black, circle, inner sep=1pt, label=left:$0.5$] at (0, 0.5*\xx) {};
\node [draw, fill=black, circle, inner sep=1pt, label=left:$1$] at (0, \xx) {};
\node [draw, fill=black, circle, inner sep=1pt, label=below:$0.5$] at (0.5*\xx, 0) {};
\node [draw, fill=black, circle, inner sep=1pt, label=below:$1$] at (1*\xx, 0) {};
\node [draw, fill=black, circle, inner sep=1pt, label=below:$1.5$] at (1.5*\xx, 0) {};
\node [draw, fill=black, circle, inner sep=1pt, label=below:$2$] at (2*\xx, 0) {};
\node [draw, fill=black, circle, inner sep=1pt, label=below:$2.5$] at (2.5*\xx, 0) {};
\node [draw, fill=black, circle, inner sep=1pt, label=below:$3$] at (3*\xx, 0) {};
\node [draw, fill=black, circle, inner sep=1pt, label=below:$3.5$] at (3.5*\xx, 0) {};
\node [draw, fill=black, circle, inner sep=1pt, label=below:$4$] at (4*\xx, 0) {};
\draw[->] (-0.2*\xx,0) -- (4.5*\xx,0);
\draw[->] (0,-0.2*\xx) -- (0, 1.5*\xx);
\draw (0,0) -- (\xx, \xx) -- (1.5*\xx, .5*\xx) -- (2*\xx, .875*\xx) -- (3*\xx, .125*\xx) -- (3.5*\xx, .5*\xx) -- (4*\xx, .5*\xx);
\end{tikzpicture}
\caption{The function \(f \colon \mathbb{R}^{+} \to \mathbb{R}^{+}\) preserves the Euclidean metric on \(\mathbb{R}\), but it is not \(3\)-adic metric preserving.}
\label{fig1}
\end{figure}

\emph{The \(p\)-adic metric preserving functions are different from metric preserving functions and from functions that preserve the Euclidean metric on \(\mathbb{R}\).}

\bigskip

Let us turn now to characteristic properties of \(p\)-adic metric (ultrametric) preserving functions.

\begin{lemma}\label{l3.2}
Let \(p \in \mathbb{P}\). Then for each pair \(n\), \(m \in \mathbb{Z}\) with \(n < m\) there are \(x\), \(y\), \(z \in \mathbb{Q}\) such that
\begin{equation}\label{l3.2:e1}
|x-z|_p = |z-y|_p = p^{m} \quad \text{and} \quad |x-y|_p = p^{n}.
\end{equation}
\end{lemma}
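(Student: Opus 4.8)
The plan is to reduce the required distance identities to statements about \(p\)-adic orders and then to exhibit explicit rational points. Recall that for \(w \in \mathbb{Q}\setminus\{0\}\) one has \(|w|_p = p^{k}\) precisely when \(\ord_p w = -k\); hence the two equalities in \eqref{l3.2:e1} are equivalent to
\[
\ord_p(x-z) = \ord_p(z-y) = -m \qquad \text{and} \qquad \ord_p(x-y) = -n.
\]
Since \(|u-v|_p\) depends only on the difference \(u-v\), the metric \(d_p\) is translation invariant, so I would first normalize by setting \(z = 0\). The task then becomes to find \(x\), \(y \in \mathbb{Q}\) with \(\ord_p x = \ord_p y = -m\) and \(\ord_p(x-y) = -n\).

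The guiding observation is the isosceles behaviour of ultrametrics: because \(n < m\), the two larger distances must coincide, and here they are the legs \(|x-z|_p\) and \(|z-y|_p\), both equal to \(p^{m}\), while the base \(|x-y|_p = p^{n}\) is strictly smaller. To realize this I want \(x\) and \(y\) to share the same leading \(p\)-power, so that it cancels in \(x-y\) and raises the order of the difference by exactly \(m-n\). Concretely, I would take
\[
x = p^{-m}, \qquad y = p^{-m} + p^{-n}.
\]
Then \(x - y = -p^{-n}\), which gives \(\ord_p(x-y) = -n\) at once, so \(|x-y|_p = p^{n}\).

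It remains to check the two legs. Clearly \(\ord_p x = -m\), so \(|x-z|_p = |x|_p = p^{m}\). For \(y\) I would write \(y = p^{-m}\bigl(1 + p^{m-n}\bigr)\); since \(m - n \ge 1\), the integer \(1 + p^{m-n}\) is congruent to \(1\) modulo \(p\) and hence coprime to \(p\), whence \(\ord_p y = -m\) and \(|z-y|_p = |y|_p = p^{m}\). This verifies all three equalities in \eqref{l3.2:e1}. I expect no genuine obstacle: the only point requiring a moment's care is confirming that the cancellation in \(x-y\) does not simultaneously lower the order of \(y\) itself, which is exactly what the coprimality of \(1 + p^{m-n}\) to \(p\) guarantees. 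If one prefers to avoid the explicit factorization, the same conclusion for \(|y|_p\) follows from the standard sharpening of the non-Archimedean inequality, namely \(|u+v|_p = \max\{|u|_p, |v|_p\}\) whenever \(|u|_p \neq |v|_p\), applied to \(u = p^{-m}\) and \(v = p^{-n}\).
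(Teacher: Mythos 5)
Your proof is correct, but it takes a genuinely different route from the paper. The paper first normalizes by scaling: multiplying all points by \(p^{l}\) shifts both exponents by \(-l\), so it reduces the lemma to finding \(x\), \(y\), \(z\) with \(|x-z|_p = |z-y|_p = 1\) and \(|x-y|_p = p^{-k}\) for \(k = m-n \geq 1\); it then uses the symmetric configuration \(z = 1\), \(x = p^{k}\), \(y = -p^{k}\). Because there \(x - y = 2p^{k}\), the factor \(2\) forces a separate case analysis for \(p = 2\) (with \(x = 2^{k-1}\), \(y = -2^{k-1}\), plus a further special choice when \(k = 1\)). Your asymmetric choice \(z = 0\), \(x = p^{-m}\), \(y = p^{-m} + p^{-n}\) avoids multiplication by any constant, so the verification reduces to \(\ord_p\bigl(1 + p^{m-n}\bigr) = 0\) --- equivalently, the sharp ultrametric equality \(|u+v|_p = \max\{|u|_p, |v|_p\}\) for \(|u|_p \neq |v|_p\), which you correctly invoke --- and it works uniformly for all primes with no case split and no preliminary scaling step. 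What the paper's reduction buys is a normalized picture (legs of length \(1\)) that makes the construction easy to guess; what your argument buys is brevity and uniformity, and it incidentally sidesteps the small slips in the paper's own write-up (equation \eqref{l3.2:e2} should read \(|x-y|_p = p^{-k}\) to be consistent with \(l = m\), \(k = m-n\), and the final case labelled \(p = 2\), \(k = 2\) is evidently meant to be \(k = 1\)). Both proofs use the same underlying isosceles mechanism; yours packages it more efficiently.
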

\begin{proof}
Let \(l \in \mathbb{Z}\). The numbers \(x\), \(y\), \(z \in \mathbb{Q}\) satisfy \eqref{l3.2:e1} if and only if 
\begin{equation}\label{l3.2:e3}
|p^{l}x-p^{l}z|_p = |p^{l}z-p^{l}y|_p = p^{m-l} \quad \text{and} \quad |p^{l}x-p^{l}y|_p = p^{n-l}.
\end{equation}
Consequently, it suffices to prove that for every strictly positive \(k \in \mathbb{Z}\) there are \(x\), \(y\), \(z \in \mathbb{Q}\) such that
\begin{equation}\label{l3.2:e2}
|x-z|_p = |z-y|_p = 1 \quad \text{and} \quad |x-y|_p = p^{k}.
\end{equation}
Indeed, \eqref{l3.2:e1} follows from \eqref{l3.2:e3} and \eqref{l3.2:e2} with \(l = m\) and \(k = m-n\). 

If \(p \geq 3\), then \eqref{l3.2:e2} holds with \(z = 1\), \(x = p^{k}\) and \(y = -p^{k}\). For \(p = 2\) and \(k \geq 2\) it suffices to set \(z = 1\), \(x = 2^{k-1}\) and \(y = -2^{k-1}\). Finally, for the case \(p = 2\) and \(k = 2\) we can set \(z = 0\), \(x = 1\) and \(y = -1\).
\end{proof}

\begin{proposition}\label{p1}
Let \(p \in \mathbb{P}\). Then the following conditions are equivalent for every function \(f \colon \mathbb{R}^{+} \to \mathbb{R}^{+}\):
\begin{enumerate}
\item [\((i)\)] \(f(0) = 0\) and the double inequality 
\[
0 < f(p^{m}) \leq 2f(p^{n})
\]
holds whenever \(m\), \(n \in \mathbb{Z}\) and \(m < n\).
\item [\((ii)\)] The function \(f\) is \(p\)-adic metric preserving.
\end{enumerate}
\end{proposition}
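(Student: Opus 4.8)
\section*{Proof proposal}

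The plan is to treat the metric axioms for $f\circ d_p$ one at a time, observing that nonnegativity and symmetry are automatic from $f\colon\mathbb{R}^{+}\to\mathbb{R}^{+}$ and the symmetry of $d_p$, so the real content lies in positive-definiteness together with the triangle inequality. The guiding observation is that $d_p$ takes only the values in $\{0\}\cup\{p^{s}:s\in\mathbb{Z}\}$, so $f\circ d_p$ depends solely on the restriction of $f$ to this set; moreover $d_p$ is an \emph{ultrametric}, so among any three of its values the isosceles principle holds: the two largest of $d_p(x,z)$, $d_p(z,y)$, $d_p(x,y)$ must coincide. This is what collapses the general triangle-triplet requirement of Theorem~\ref{t3.8} into the single scalar inequality appearing in $(i)$, and it is the structural fact on which the whole argument turns.

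For $(i)\Rightarrow(ii)$ I would first dispatch positive-definiteness: $f(0)=0$ is assumed, and if $x\neq y$ then $d_p(x,y)=p^{s}$ for some $s$, whence $f(d_p(x,y))>0$ by the strict positivity in $(i)$. For the triangle inequality fix $x,y,z\in\mathbb{Q}_p$; the degenerate cases where two of them coincide give equality or the trivial bound coming from $f(0)=0$, so assume them distinct and set $a=d_p(x,z)$, $b=d_p(z,y)$, $c=d_p(x,y)$, each a power of $p$. By the isosceles principle the maximum of $\{a,b,c\}$ is attained at least twice. If $c$ equals that maximum, then $c=a$ or $c=b$ and $f(c)\le f(a)+f(b)$ is immediate; the only remaining case is $c<\max\{a,b\}$, which forces $a=b=p^{n}$ with $c=p^{m}$ and $m<n$, and then $(i)$ yields exactly $f(c)=f(p^{m})\le 2f(p^{n})=f(a)+f(b)$. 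Hence $f\circ d_p$ is a metric.

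For $(ii)\Rightarrow(i)$ I would read the required inequalities off the metric axioms. Taking $x=y$ gives $f(0)=0$, and since every $p^{s}$ is realized as some $d_p(x,y)$ with $x\neq y$ (for instance $x=p^{-s}$, $y=0$), positive-definiteness forces $f(p^{s})>0$ for all $s\in\mathbb{Z}$. For the inequality $f(p^{m})\le 2f(p^{n})$ with $m<n$, the point is to produce the isosceles configuration, and this is precisely what Lemma~\ref{l3.2} supplies: it gives $x,y,z\in\mathbb{Q}$ with $|x-z|_p=|z-y|_p=p^{n}$ and $|x-y|_p=p^{m}$. Applying the triangle inequality for the metric $f\circ d_p$ to this triple gives $f(p^{m})\le f(p^{n})+f(p^{n})=2f(p^{n})$, which is $(i)$.

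The step I expect to require the most care is the reduction in the forward direction, because $f$ is not assumed monotone: the naive estimate $f(c)\le 2f(\max\{a,b\})$ coming from $(i)$ does not by itself dominate $f(a)+f(b)$ when $f(a)$ happens to be small. It is exactly the isosceles principle that removes this gap by forcing $a=b$ whenever $c$ is the strict minimum, so I would take care to justify that principle (a one-line consequence of the strong triangle inequality) before invoking it. Finally, it is worth remarking that this proposition is the $d_p$-specialization of Lemma~\ref{l1}: the role of Lemma~\ref{l3.2} is to certify that the discrete value set $\{p^{s}\}$ already realizes every isosceles triple needed, so that restricting the hypothesis of Lemma~\ref{l1} to powers of $p$ loses nothing.
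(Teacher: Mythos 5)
Your proof is correct, and in the forward direction it takes a genuinely different route from the paper. The converse direction coincides with the paper's: both apply Lemma~\ref{l3.2} to produce the isosceles configuration \(|x-z|_p = |z-y|_p = p^{n}\), \(|x-y|_p = p^{m}\) and read \(f(p^{m}) \leq 2f(p^{n})\) off the triangle inequality, with \(f(0)=0\) and \(f(p^{s})>0\) coming from the metric axioms exactly as you say. For \((i) \Rightarrow (ii)\), however, the paper does not verify the triangle inequality on \(\mathbb{Q}_p\) directly: it extends \(f|_{\Ran(|\cdot|_p)}\) to the step function \(\Psi_p(x) = f(p^{m})\) for \(x \in [p^{m}, p^{m+1})\), checks that \(\Psi_p\) satisfies the global hypothesis \(0 < \Psi(a) \leq 2\Psi(b)\) for \(0 < a < b\) of Lemma~\ref{l1}, and invokes that lemma as a black box, concluding via \(f \circ d_p = \Psi_p \circ d_p\). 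You instead inline the relevant special case of Lemma~\ref{l1}: the isosceles principle (the maximum of the three distances in an ultrametric triangle is attained at least twice) reduces the triangle-triplet condition to the single inequality \(f(p^{m}) \leq 2f(p^{n})\) with \(m < n\), and your case analysis is complete --- degenerate triples, \(c\) equal to the maximum, and \(c\) strictly below the maximum forcing \(a = b\), which is precisely the point where non-monotonicity of \(f\) would otherwise be a problem, as you correctly flag. What each approach buys: the paper's \(\Psi_p\) device is reused later (Remark~\ref{r3.4}, Proposition~\ref{p3.6}, Theorem~\ref{t3.7}), so introducing it here pays dividends, while your argument is self-contained, does not depend on the external Lemma~\ref{l1}, and makes visible exactly where the constant \(2\) is needed; incidentally, your direct verification also confirms that the correct conclusion in this direction is that \(f \circ d_p\) is a \emph{metric} (the paper's proof momentarily says ``ultrametric'' there, which is a slip, since condition \((i)\) does not force monotonicity on the powers of \(p\)). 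Your closing remark --- that Lemma~\ref{l3.2} certifies that the value set \(\{p^{s}\}\) realizes every isosceles triple, so restricting Lemma~\ref{l1} to \(\Ran(|\cdot|_p)\) loses nothing --- is exactly the structural content of the proposition.
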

\begin{proof}
\((i) \Rightarrow (ii)\). Suppose \(f \colon \mathbb{R}^{+} \to \mathbb{R}^{+}\) satisfies condition \((i)\). Let us define a function \(\Psi_p \colon \mathbb{R}^{+} \to \mathbb{R}^{+}\) as
\begin{equation}\label{p1:e1}
\Psi_p(x) = \begin{cases}
0, & \text{if } x = 0\\
f(p^{m}), & \text{if } x \in [p^{m}, p^{m+1}) \text{ for } m \in \mathbb{Z}.
\end{cases}
\end{equation}
Then \(\Psi_p\) satisfies condition \((i)\) of Lemma~\ref{l1}. Consequently, \(\Psi_p \circ d_p\) is an ultrametric by Lemma~\ref{l1}. From~\eqref{l3.2:e1} it follows that \(f(|x-y|_p) = \Psi_p(|x-y|_p)\) for all \(x\), \(y \in \mathbb{Q}_p\). Hence, \(f \circ d_p\) is also an ultrametric.

\((ii) \Rightarrow (i)\). Suppose \((ii)\) holds. Then 
\[
f(|x-y|_p), f(|x-z|_p), f(|z-y|_p)
\]
is a triangle triplet. Using Lemma~\ref{l3.2}, for each pair \(n\), \(m \in \mathbb{Z}\) with \(n < m\), we can find \(x\), \(y\), \(z \in \mathbb{Q} \subseteq \mathbb{Q}_p\) such that \eqref{l3.2:e1} holds. Consequently, \((f(p^{m}), f(p^{m}), f(p^{n}))\) is a triangle triplet whenever \(n\), \(m \in \mathbb{Z}\) and \(n < m\). It implies the inequality \(f(p^{n}) \leq 2f(p^{m})\) whenever \(n < m\). Condition \((i)\) follows.
\end{proof}

\begin{remark}\label{r3.4}
Let \(p \in \mathbb{P}\) and let \(f \colon \mathbb{R}^{+} \to \mathbb{R}^{+}\) be amenable. If the inequality 
\[
f(p^{n}) \leq f(p^{n+1})
\]
holds for every \(n \in \mathbb{Z}\), then the function \(\Psi_p \colon \mathbb{R}^{+} \to \mathbb{R}^{+}\), defined by~\eqref{p1:e1}, is increasing and amenable. By Theorem~\ref{t2.4}, this is an ultrametric preserving function.
\end{remark}

The following example shows that \(f \circ d_2\) may not be a metric on \(\mathbb{Q}_2\) even if \(f \colon \mathbb{R}^{+} \to \mathbb{R}^{+}\) is amenable and the inequality 
\begin{equation}\label{e1}
f(2^{n-1}) \leq 2f(2^n)
\end{equation}
holds for every \(n \in \mathbb{Z}\).

\begin{example}
Let us define a function \(f \colon \mathbb{R}^{+} \to \mathbb{R}^{+}\) as
\[
f(x) = \begin{cases}
0, & \text{if } x = 0\\
\frac{1}{x}, & \text{if } x \neq 0.
\end{cases}
\]
Then \(f\) is amenable and
\[
f(2^n) = \left(\frac{1}{2}\right)^{n}
\]
holds for every \(n \in \mathbb{Z}\). Hence, we have \eqref{e1} for every \(n \in \mathbb{Z}\). Write \(x = 1\), \(y = -3\) and \(z = 0\). Then the equalities
\[
|x|_2 = 1, \quad |y|_2 = 1, \quad |x-y|_2 = 2^{-2}
\]
hold. It implies
\[
f(|x-z|_2) = f(|z-y|_2) = 1 \quad \text{and} \quad f(|x-y|_2) = 4.
\]
Since the \(3\)-tuple \((1, 1, 4)\) is not a triangle triplet, the function \(f\) is not \(2\)-adic metric preserving.
\end{example}

\begin{proposition}\label{p3.6}
Let \(p \in \mathbb{P}\). The following conditions are equivalent for every function \(f \colon \mathbb{R}^{+} \to \mathbb{R}^{+}\):
\begin{enumerate}
\item \(f(0) = 0\) and the double inequality \(0 < f(p^{n}) \leq f(p^{n+1})\) holds.
\item The function \(f\) is \(p\)-adic ultrametric preserving.
\end{enumerate}
\end{proposition}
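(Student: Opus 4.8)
The plan is to mirror the proof of Proposition~\ref{p1}, replacing the ``metric'' ingredients by their ``ultrametric'' counterparts: instead of Lemma~\ref{l1} I would invoke Theorem~\ref{t2.4} (through Remark~\ref{r3.4}), and instead of the ordinary triangle triplet I would use the strong triangle inequality. The central device is again the step function $\Psi_p$ defined in~\eqref{p1:e1}, namely $\Psi_p(0)=0$ and $\Psi_p(x)=f(p^{m})$ for $x\in[p^{m},p^{m+1})$. The observation that makes $\Psi_p$ useful is that the value set of $|\cdot|_p$ on $\mathbb{Q}_p$ is exactly $\{0\}\cup\{p^{m}:m\in\mathbb{Z}\}$, and on this set $\Psi_p$ agrees with $f$; hence $f\circ d_p=\Psi_p\circ d_p$ even though $f$ and $\Psi_p$ may differ elsewhere on $\mathbb{R}^{+}$.

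For the implication $(i)\Rightarrow(ii)$ I would argue as follows. Assuming $f(0)=0$ and $0<f(p^{n})\le f(p^{n+1})$ for every $n\in\mathbb{Z}$, Remark~\ref{r3.4} tells us that $\Psi_p$ is increasing and amenable, so by Theorem~\ref{t2.4} the function $\Psi_p$ is ultrametric preserving. In particular $\Psi_p\circ d_p$ is an ultrametric on $\mathbb{Q}_p$. Since $f\circ d_p=\Psi_p\circ d_p$ by the range observation above, $f\circ d_p$ is an ultrametric and $f$ is $p$-adic ultrametric preserving.

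For the converse $(ii)\Rightarrow(i)$ I would first extract positivity from the ultrametric axioms: every value $p^{n}$ is attained as $|x-y|_p$ for suitable distinct $x,y\in\mathbb{Q}_p$ (e.g.\ $x=p^{-n}$, $y=0$), so positive definiteness of $f\circ d_p$ forces $f(0)=0$ and $f(p^{n})>0$ for all $n\in\mathbb{Z}$. For the monotonicity, fix $n<m$ and use Lemma~\ref{l3.2} to obtain $x,y,z\in\mathbb{Q}\subseteq\mathbb{Q}_p$ with $|x-z|_p=|z-y|_p=p^{m}$ and $|x-y|_p=p^{n}$. Applying the strong triangle inequality for the ultrametric $f\circ d_p$ to this triple gives $f(p^{n})\le\max\{f(p^{m}),f(p^{m})\}=f(p^{m})$; taking $m=n+1$ yields $f(p^{n})\le f(p^{n+1})$, which together with the positivity established above is exactly condition $(i)$.

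The genuinely routine steps are the verification that $\Psi_p$ is increasing (already packaged in Remark~\ref{r3.4}) and the positivity bookkeeping. The one point deserving care, and the main conceptual obstacle, is the identity $f\circ d_p=\Psi_p\circ d_p$: it rests entirely on the discreteness of the value set of $|\cdot|_p$, and it is precisely this that lets an arbitrary $f$ satisfying $(i)$ be replaced by a bona fide increasing amenable function to which Theorem~\ref{t2.4} applies. Note the contrast with Proposition~\ref{p1}: there the weaker hypothesis $f(p^{m})\le 2f(p^{n})$ suffices because only the ordinary triangle inequality must survive, whereas here the strong triangle inequality forces the sharper monotonicity $f(p^{n})\le f(p^{n+1})$.
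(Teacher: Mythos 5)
Your proposal is correct and takes essentially the same route as the paper's own proof: the same step function $\Psi_p$ from \eqref{p1:e1} combined with Remark~\ref{r3.4} and Theorem~\ref{t2.4} for $(i)\Rightarrow(ii)$, using the discreteness of $\Ran(|\cdot|_p)$ to get $f\circ d_p=\Psi_p\circ d_p$, and for $(ii)\Rightarrow(i)$ the same extraction of $f(0)=0$ and $f(p^{n})>0$ via $x=p^{-n}$, $y=0$, followed by Lemma~\ref{l3.2} and the strong triangle inequality to obtain $f(p^{n})\leq f(p^{n+1})$. The only cosmetic difference is that you state the monotonicity step for general $n<m$ before specializing to $m=n+1$, whereas the paper applies Lemma~\ref{l3.2} directly to the consecutive pair.
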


\begin{proof}
\((i) \Rightarrow (ii)\). Let \((i)\) hold. In correspondence with Remark~\ref{r3.4}, the function \(\Psi_p \colon \mathbb{R}^{+} \to \mathbb{R}^{+}\), defined by~\eqref{p1:e1}, is ultrametric preserving. Consequently, for all \(x\), \(y\), \(z \in \mathbb{Q}_p\) the tuple 
\[
(\Psi_p(|x-y|_p), \Psi_p(|x-z|_p), \Psi_p(|z-y|_p))
\]
is a strong triangle triplet. Since, for every \(t \in \mathbb{Q}_p\), we have \(|t|_p \in \Ran(|\cdot|_p)\), where \(\Ran(|\cdot|_p)\) is the range of the function \(|\cdot|_p \colon \mathbb{Q}_p \to \mathbb{Q}_p\),
\[
\Ran(|\cdot|_p) = \{p^{n} \colon n \in \mathbb{Z}\} \cup \{0\},
\]
the equality \(\Psi_p(|t|_p) = f(|t|_p)\) holds for every \(t \in \mathbb{Q}_p\). Hence,
\[
(f(|x-y|_p), f(|x-z|_p), f(|z-y|_p))
\]
is also a strong triangle triplet. Statement \((ii)\) follows.

\((ii) \Rightarrow (i)\). Let \((ii)\) hold. Then \(f \circ d_p\) is an ultrametric on \(\mathbb{Q}_p\). Consequently, for every \(t \in \mathbb{Q}_p\), we have
\[
f(|t-t|_p) = f(0) = 0
\]
and, in addition, for \(n \in \mathbb{Z}\) and \(x = p^{-n}\),
\[
0 < f(|x - 0|_p) = f(|p^{-n}|_p) = f(p^{n}).
\]
By Lemma~\ref{l3.2}, there are \(x\), \(y\), \(z \in \mathbb{Q} \subseteq \mathbb{Q}_p\) such that
\[
|x-y|_p = p^{n-1} \quad \text{and} \quad |x-z|_p = |z-y|_p = p^{n}.
\]
Since \(f \circ d_p\) is an ultrametric, \((f(p^{n-1}), f(p^{n}), f(p^{n}))\) is a strong triangle triplet. Hence, 
\[
0 < f(p^{n-1}) \leq f(p^{n})
\]
holds. Statement \((i)\) follows.
\end{proof}

In the next theorem and everywhere in the future we will write \(f|_{A}\) for the restriction of the function \(f \colon \mathbb{R}^{+} \to \mathbb{R}^{+}\) to the set \(A \subseteq \mathbb{R}^{+}\).

\begin{theorem}\label{t3.7}
Let \(p \in \mathbb{P}\). Then the following statements are equivalent for every function \(f \colon \mathbb{R}^{+} \to \mathbb{R}^{+}\):
\begin{enumerate}
\item\label{t3.7:s1} There is an ultrametric preserving \(g \colon \mathbb{R}^{+} \to \mathbb{R}^{+}\) such that
\[
g|_{\Ran(|\cdot|_p)} = f|_{\Ran(|\cdot|_p)},
\]
where \(\Ran(|\cdot|_p)\) is the range of the function \(|\cdot|_p \colon \mathbb{Q}_p \to \mathbb{Q}_p\).
\item\label{t3.7:s2} The function \(f\) is \(p\)-adic ultrametric preserving.
\end{enumerate}
\end{theorem}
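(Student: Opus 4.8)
The plan is to reduce both conditions to the explicit pointwise characterizations already available and to observe that they coincide on $\Ran(|\cdot|_p) = \{p^{n} \colon n \in \mathbb{Z}\} \cup \{0\}$. By Theorem~\ref{t2.4}, statement~\ref{t3.7:s1} says precisely that there exists an increasing amenable $g \colon \mathbb{R}^{+} \to \mathbb{R}^{+}$ agreeing with $f$ on $\Ran(|\cdot|_p)$, while by Proposition~\ref{p3.6}, statement~\ref{t3.7:s2} is equivalent to $f(0) = 0$ together with $0 < f(p^{n}) \leq f(p^{n+1})$ for all $n \in \mathbb{Z}$. So the whole theorem becomes the assertion that these two families of conditions determine one another.

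For the implication \ref{t3.7:s1} $\Rightarrow$ \ref{t3.7:s2} I would simply transfer the properties of $g$ to $f$ along the equality $g|_{\Ran(|\cdot|_p)} = f|_{\Ran(|\cdot|_p)}$. Since $g$ is amenable, $f(0) = g(0) = 0$ and $f(p^{n}) = g(p^{n}) > 0$ for each $n$; since $g$ is increasing and $p^{n} < p^{n+1}$, we obtain $f(p^{n}) = g(p^{n}) \leq g(p^{n+1}) = f(p^{n+1})$. This is exactly condition~(i) of Proposition~\ref{p3.6}, whence $f$ is $p$-adic ultrametric preserving.

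For the converse \ref{t3.7:s2} $\Rightarrow$ \ref{t3.7:s1}, the witness $g$ is essentially already built: take $g = \Psi_p$ from~\eqref{p1:e1}. Applying Proposition~\ref{p3.6} gives $f(0) = 0$ and $0 < f(p^{n}) \leq f(p^{n+1})$ for every $n$, which is precisely the hypothesis of Remark~\ref{r3.4}; hence $\Psi_p$ is increasing and amenable, and therefore ultrametric preserving by Theorem~\ref{t2.4}. It then remains to check that $\Psi_p$ restricts to $f$ on $\Ran(|\cdot|_p)$: indeed $\Psi_p(0) = 0 = f(0)$, and since $p^{n} \in [p^{n}, p^{n+1})$ the definition forces $\Psi_p(p^{n}) = f(p^{n})$ for all $n \in \mathbb{Z}$. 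Thus $g = \Psi_p$ satisfies~\ref{t3.7:s1}.

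There is no serious obstacle here; the real content is the recognition that the step function $\Psi_p$ is the correct interpolation of the values $f(p^{n})$ to all of $\mathbb{R}^{+}$. The two points meriting care are that $\Psi_p$ is genuinely nondecreasing \emph{across} consecutive blocks $[p^{m}, p^{m+1})$, not merely constant within each block, which is exactly where the inequality $f(p^{m}) \leq f(p^{m+1})$ is used, and that $\{0\} \cup \bigcup_{m \in \mathbb{Z}} [p^{m}, p^{m+1}) = \mathbb{R}^{+}$, so that $\Psi_p$ is defined everywhere. Both are immediate from the construction, so the substantive work has already been carried out in Proposition~\ref{p3.6} and Remark~\ref{r3.4}.
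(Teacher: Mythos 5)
Your proof is correct and takes essentially the same route as the paper's own (very terse) proof, which likewise obtains \ref{t3.7:s1} $\Rightarrow$ \ref{t3.7:s2} from Theorem~\ref{t2.4} and Proposition~\ref{p3.6}, and \ref{t3.7:s2} $\Rightarrow$ \ref{t3.7:s1} from Theorem~\ref{t2.4}, Proposition~\ref{p3.6} and Remark~\ref{r3.4}, with the step function $\Psi_p$ of~\eqref{p1:e1} as the witness $g$. You have merely written out the details the paper leaves implicit, including the useful observation that $\Psi_p(p^n)=f(p^n)$ because $p^n\in[p^n,p^{n+1})$.
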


\begin{proof}
\(\ref{t3.7:s1} \Rightarrow \ref{t3.7:s2}\). Let \ref{t3.7:s1} hold. Then using Theorem~\ref{t2.4} and Proposition~\ref{p3.6} we obtain statement~\ref{t3.7:s2}.

\(\ref{t3.7:s2} \Rightarrow \ref{t3.7:s1}\). If \ref{t3.7:s2} holds, then \ref{t3.7:s1} follows from Theorem~\ref{t2.4}, Proposition~\ref{p3.6} and Remark~\ref{r3.4}.
\end{proof}

\section{Back to ultrametric preserving functions}\label{sec:5}

It is clear that for every \(p \in \mathbb{P}\) the range of the metric \(d_p \colon \mathbb{Q}_p \times \mathbb{Q}_p \to \mathbb{R}^{+}\) coincides with the range of the \(p\)-adic value \(|\cdot|_p\), \(\Ran(|\cdot|_p) = \Ran(d_p)\). Hence, by Theorem~\ref{t3.7}, a mapping \(f \circ d_p\) is a metric on \(\mathbb{Q}_p\) if and only if there exists an ultrametric preserving function \(g \colon \mathbb{R}^{+} \to \mathbb{R}^{+}\) such that
\[
f|_{\Ran(d_p)} = g|_{\Ran(d_p)}.
\]

The following is an example of an ultrametric space \((X, d)\) and a function \(f \colon \mathbb{R}^{+} \to \mathbb{R}^{+}\) for which \(f \circ d\) is an ultrametric on \(X\), but such that 
\begin{equation}\label{e5.1}
f|_{\Ran(d)} \neq g|_{\Ran(d)}
\end{equation}
for any ultrametric preserving function \(g \colon \mathbb{R}^{+} \to \mathbb{R}^{+}\).

\begin{example}\label{ex4.1}
Let \(X = \{x_1, x_2, x_3, x_4\}\) be a four-point set. A function \(d \colon X \times X \to \mathbb{R}^{+}\),
\[
d(x, y) = \begin{cases}
0, & \text{if } x = y\\
1, & \text{if } \{x, y\} = \{x_1, x_3\}\\
2, & \text{if } \{x, y\} = \{x_2, x_4\}\\
3, & \text{otherwise},
\end{cases}
\]
is an ultrametric on \(X\). We define a function \(f \colon \mathbb{R}^{+} \to \mathbb{R}^{+}\) as
\[
f(t) = \begin{cases}
2t, & \text{if } 0 \leq t \leq 1\\
3-t, & \text{if } 1 \leq t \leq 2\\
-1 + \frac{4}{3} t, & \text{if } 2 \leq t \leq 3\\
3, & \text{if } t \geq 3.
\end{cases}
\]
Then \(f\) is amenable, \(1 = f(2) < f(1) = 2\) holds and \(f \circ d\) is an ultrametric on \(X\). By Theorem~\ref{t2.4}, we obtain \eqref{e5.1} for all ultrametric preserving \(g \colon \mathbb{R}^{+} \to \mathbb{R}^{+}\). It is interesting to note that the ultrametric space \((X, d)\) and \((X, f \circ d)\) are isometric (see Figure~\ref{fig2}).
\end{example}

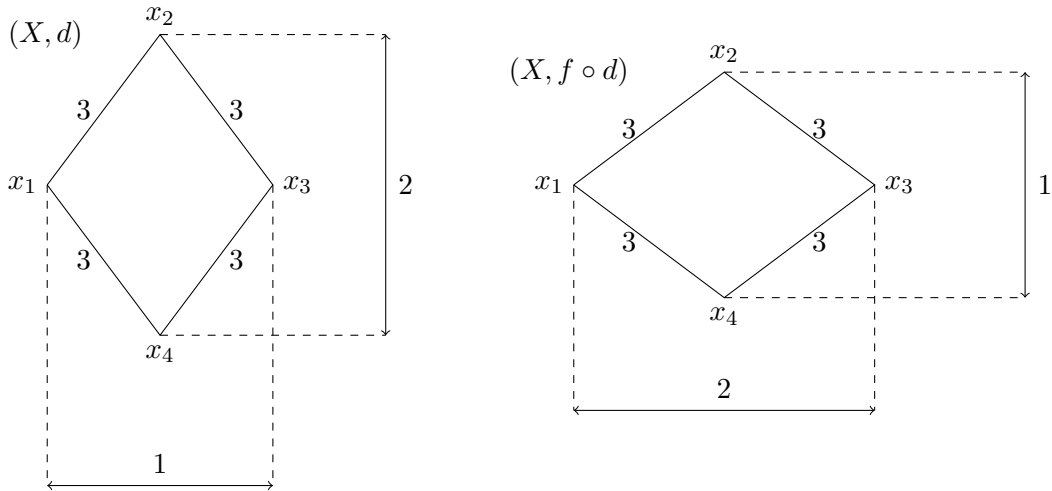
\begin{figure}[htb]
\begin{tikzpicture}
\def\xx{1.5cm}
\def\yy{2cm}
\coordinate [label=left:\(x_1\)] (x1) at (0,0);
\coordinate [label=above:\(x_2\)] (x2) at (\xx,\yy);
\coordinate [label=right:\(x_3\)] (x3) at (2*\xx,0);
\coordinate [label=below:\(x_4\)] (x4) at (\xx,-\yy);
\node [label=left:{$(X, d)$}] at (0.5*\xx,\yy) {};
\draw (x1) -- node [left=1pt] {\(3\)} (x2) -- node [right=1pt] {\(3\)} (x3) -- node [right=1pt] {\(3\)} (x4) -- node [left=1pt] {\(3\)} (x1);
\draw [dashed] (x1) -- ($(x1) - (0,2*\yy)$);
\draw [dashed] (x3) -- ($(x3) - (0,2*\yy)$);
\draw [<->] ($(x1) - (0,2*\yy)$) -- node [above=1pt] {\(1\)} ($(x3) - (0,2*\yy)$);

\draw [dashed] (x2) -- ($(x2) + (2*\xx,0)$);
\draw [dashed] (x4) -- ($(x4) + (2*\xx,0)$);
\draw [<->] ($(x2) + (2*\xx,0)$) -- node [right=1pt] {\(2\)} ($(x4) + (2*\xx,0)$);

\def\xx{2cm}
\def\yy{1.5cm}
\def\OO{7cm}
\coordinate [label=left:\(x_1\)] (x1) at (\OO,0);
\coordinate [label=above:\(x_2\)] (x2) at (\OO+\xx,\yy);
\coordinate [label=right:\(x_3\)] (x3) at (\OO+2*\xx,0);
\coordinate [label=below:\(x_4\)] (x4) at (\OO+\xx,-\yy);
\node [label=left:{$(X, f \circ d)$}] at (\OO+0.5*\xx,\yy) {};
\draw (x1) -- node [left=1pt] {\(3\)} (x2) -- node [right=1pt] {\(3\)} (x3) -- node [right=1pt] {\(3\)} (x4) -- node [left=1pt] {\(3\)} (x1);
\draw [dashed] (x1) -- ($(x1) - (0,2*\yy)$);
\draw [dashed] (x3) -- ($(x3) - (0,2*\yy)$);
\draw [<->] ($(x1) - (0,2*\yy)$) -- node [above=1pt] {\(2\)} ($(x3) - (0,2*\yy)$);

\draw [dashed] (x2) -- ($(x2) + (2*\xx,0)$);
\draw [dashed] (x4) -- ($(x4) + (2*\xx,0)$);
\draw [<->] ($(x2) + (2*\xx,0)$) -- node [right=1pt] {\(1\)} ($(x4) + (2*\xx,0)$);
\end{tikzpicture}
\caption{The mapping \(\Phi \colon X \to X\) with \(\Phi(x_i) = x_{i+1}\) for \(i = 1\), \(2\), \(3\) and \(\Phi(x_4) = x_1\) is an isometry of ultrametric spaces \((X, d)\) and \((X, f \circ d)\).}
\label{fig2}
\end{figure}

\begin{remark}\label{r4.2}
It can be proved that there are no isometric embeddings of the above ultrametric space \(X\) in the Euclidean plane \(E^{2}\), but we can find a four-point subset \(Y\) of the \(3\)-dimensional Euclidean space \(E^{3}\) such that \(Y\) and \(X\) are isometric (see Figure~\ref{fig3} and Lemma~\ref{l5.12} below).
\end{remark}

\begin{figure}[hbt]
\begin{tikzpicture}
\def\xx{1.5cm}
\def\yy{2cm}
\coordinate [label=left:\(y_1\)] (y1) at (0,0);
\coordinate [label=above:\(y_2\)] (y2) at (-\xx,\yy);
\coordinate [label=right:\(y_3\)] (y3) at (2*\xx,2*\yy);
\coordinate [label=below:\(y_4\)] (y4) at (2*\xx,\yy);

\node [label=above:{\(Y = \{y_1, y_2, y_3, y_4\}\)}] at (-\xx,1.5*\yy) {};

\draw (y1) -- node [left] {\(2\)} (y2) --node [above] {\(3\)} (y3) -- node [above] {\(3\)} (y1);
\draw (y2) -- node [below] {\(3\)} (y4) -- node [above] {\(3\)} (y1);
\end{tikzpicture}
\caption{The isosceles triangles \((y_1, y_2, y_3)\) and \((y_1, y_2, y_4)\) have the legs of length three and the common base \((y_1, y_2)\) of length two. The distance between the vertices \(y_3\) and \(y_4\) equals one.}
\label{fig3}
\end{figure}
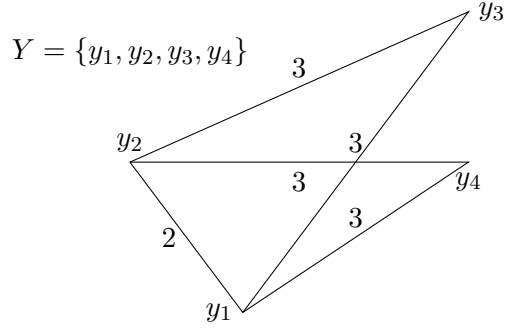

Now we want to describe all ultrametric spaces for which an analog of Theorem~\ref{t3.7} is valid. For this goal we recall some definitions.

Let \(X\) be a set. A \emph{binary relation} on \(X\) is a subset of the Cartesian square 
\[
X \times X = \{\<x, y> \colon x, y \in X\}.
\]

Let \(X\) be a set and let \(R_1\) and \(R_2\) be binary relations on \(X\). Recall that a \emph{composition of binary relations} \(R_1\) and \(R_2\) is a binary relation \(R_1 \circ R_2 \subseteq X \times X\) for which \(\<x, y> \in R_1 \circ R_2\) holds if and only if there is \(z \in X\) such that \(\<x, z> \in R_1\) and \(\<z, y> \in R_2\). 

A binary relation \(\alpha \subseteq X \times X\) is \emph{transitive} if
\[
(\<x,y> \in \alpha \text{ and } \<y, z> \in \alpha) \Rightarrow \<x,z> \in \alpha
\]
is valid for all \(x\), \(y\), \(z \in X\). Moreover, we say that \(\alpha\) is \emph{reflexive} if \(\<x,x> \in \alpha\) holds for every \(x \in X\).

Let \(\gamma\) be a binary relation on a set \(X\). We will write \(\gamma^{1} = \gamma\) and \(\gamma^{n+1} =  \gamma^{n}\circ \gamma\) for every integer \(n \geqslant 1\). The \emph{transitive closure} \(\gamma^{t}\) of \(\gamma\) is the relation
\begin{equation}\label{e3.2}
\gamma^{t} := \bigcup_{n=1}^{\infty} \gamma^{n}.
\end{equation}

For every \(\beta \subseteq X \times X\), the transitive closure \(\beta^{t}\) is transitive and the inclusion \(\beta \subseteq \beta^{t}\) holds. Moreover, if \(\tau \subseteq X \times X\) is an arbitrary transitive binary relation for which \(\beta \subseteq \tau\), then we also have \(\beta^{t} \subseteq \tau\), i.e., \(\beta^{t}\) is the smallest transitive binary relation containing \(\beta\).

Let us denote by \(\Delta_X\) the \emph{diagonal} of \(X\),
\[
\Delta_X = \{\<x, x> \colon x \in X\}.
\]
Then, for every \(\delta \subseteq X \times X\), the relation \(\delta \cup \Delta_X\) is the smallest reflexive binary relation on \(X\) containing the relation~\(\delta\).

\begin{definition}\label{d4.3}
Let \(U\) be a non-empty class of non-empty ultrametric spaces \((X, d)\) and let
\[
\Ran_U := \bigcup_{(X, d) \in U} \Ran(d).
\]
We denote by \(G_U\) the binary relation on the set \(\Ran_U\) defined by the rule: An ordered pair \(\<s, t>\) belongs to \(G_U\) if and only if there exist \((X, d) \in {U}\) and \(x_1\), \(x_2\), \(x_3 \in X\) such that
\begin{equation}\label{d4.3:e1}
s= d(x_1, x_3) \quad \text{and} \quad t = d(x_1, x_2) = d(x_2, x_3).
\end{equation}
\end{definition}

Recall that a reflexive and transitive binary relation \(\preccurlyeq_Y\) on a set \(Y\) is a \emph{partial order} on \(Y\) if, for all \(x\), \(y \in Y\), we have the \emph{antisymmetric property},
\[
\bigl(\<x, y> \in \preccurlyeq_Y \text{ and } \<y, x> \in \preccurlyeq_Y \bigr) \Rightarrow (x = y).
\]

In what follows we use the formula \(x \preccurlyeq y\) instead of \(\<x, y> \in \preccurlyeq\).

Let \(\preccurlyeq_Y\) be a partial order on a set \(Y\). A pair \((Y, {\preccurlyeq_Y})\) is called a \emph{poset} (a partially ordered set).

An element \(y_0 \in Y\) is, by definition, the \emph{least element} of poset \((Y, {\preccurlyeq_Y})\) if the inequality \(y_0 \preccurlyeq_Y y\) holds for every \(y \in Y\).

\begin{proposition}\label{p4.5}
Let \(U\) be a non-empty class of non-empty ultrametric spaces. The binary relation
\begin{equation}\label{p4.5:e1}
{\preccurlyeq_U} := G_U^{t} \cup \Delta_{\Ran_U}
\end{equation}
is a partial order on the set \(\Ran_U\). Moreover, the number \(0\) belongs to \(\Ran_U\) and this number is the least element of the poset \((\Ran_U, \preccurlyeq_U)\).
\end{proposition}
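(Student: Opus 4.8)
The plan is to check the three axioms of a partial order for $\preccurlyeq_U = G_U^{t} \cup \Delta_{\Ran_U}$ and then treat the least-element assertion separately. Reflexivity I get for free: adjoining the diagonal $\Delta_{\Ran_U}$ puts every pair $\<a, a>$ with $a \in \Ran_U$ into $\preccurlyeq_U$ by construction. For transitivity I would use that $G_U^{t}$ is transitive by definition of the transitive closure and argue that adjoining the diagonal cannot spoil this; splitting into cases according to whether each of two composable pairs lies in $\Delta_{\Ran_U}$ or in $G_U^{t}$, a pair coming from the diagonal merely identifies two of the three points, so the composite pair remains in $G_U^{t} \cup \Delta_{\Ran_U}$.

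The heart of the matter is antisymmetry, and the key observation I would isolate first is that $G_U$ is compatible with the usual order on $\mathbb{R}^{+}$. If $\<s, t> \in G_U$, then Definition~\ref{d4.3} supplies $(X, d) \in U$ and points $x_1, x_2, x_3 \in X$ with $s = d(x_1, x_3)$ and $t = d(x_1, x_2) = d(x_2, x_3)$, and the strong triangle inequality gives
\[
s = d(x_1, x_3) \leq \max\{d(x_1, x_2), d(x_2, x_3)\} = t.
\]
Next I would propagate this along chains: a witness to $\<s, t> \in G_U^{n}$ is a sequence $s = a_0, a_1, \dots, a_n = t$ with each consecutive pair in $G_U$, hence non-decreasing, so $s \leq t$; taking the union over $n$ shows that $\<s, t> \in G_U^{t}$ forces $s \leq t$ as well. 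Since pairs in the diagonal trivially satisfy $s = t$, every relation $x \preccurlyeq_U y$ entails $x \leq y$ in the usual order, and antisymmetry follows at once: $x \preccurlyeq_U y$ and $y \preccurlyeq_U x$ yield $x \leq y$ and $y \leq x$, so $x = y$.

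For the final assertion I would first note that $0 \in \Ran_U$, which holds because $U$ is non-empty and each member $(X, d)$ is a non-empty space, so any point $x$ gives $0 = d(x, x) \in \Ran(d) \subseteq \Ran_U$. To show $0 \preccurlyeq_U a$ for an arbitrary $a \in \Ran_U$, I would pick $(X, d) \in U$ and $u$, $v \in X$ with $d(u, v) = a$ and apply Definition~\ref{d4.3} with the degenerate choice $x_1 = x_3 = u$ and $x_2 = v$; then $d(x_1, x_3) = 0$ while $d(x_1, x_2) = d(x_2, x_3) = a$ by symmetry of $d$, so $\<0, a> \in G_U \subseteq \preccurlyeq_U$. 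The one subtlety worth flagging is that Definition~\ref{d4.3} does not require the three points to be distinct, which is precisely what licenses this degenerate configuration. I expect antisymmetry, via the order-compatibility of $G_U$, to be the only genuinely nontrivial step, with the remaining verifications being bookkeeping.
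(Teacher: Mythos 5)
Your proof is correct and takes essentially the same route as the paper: antisymmetry is obtained by showing \({\preccurlyeq_U} \subseteq {\leq}\) via the strong triangle inequality applied along \(G_U\)-chains, and the least-element claim uses exactly the paper's degenerate triple \(x_1 = x_3 = x\), \(x_2 = y\) to get \(\langle 0, t\rangle \in G_U\). The only cosmetic difference is that you isolate the one-step fact \(\langle s, t\rangle \in G_U \Rightarrow s \leq t\) and induct on chain length, where the paper unrolls the witnessing chain explicitly.
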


\begin{proof}
It follows directly from~\eqref{p4.5:e1} that \({\preccurlyeq_U}\) is reflexive and transitive. Suppose that \(s\) and \(t\) are distinct elements of \(\Ran_U\) such that \(s \preccurlyeq_U t\). Then \(\<s, t> \notin \Delta_{\Ran_U}\) holds and there is a positive integer \(n\) such that \(\<s, t> \in G_U^{n}\) (see Definition~\ref{d4.3}). Consequently, there are \((X_i, d_i) \in U\) and \(x_1^{i}\), \(x_2^{i}\), \(x_3^{i} \in X_i\), \(i = 1, \ldots, n\), such that
\begin{equation}\label{p4.5:e2}
\begin{split}
s &= d_1(x_1^{1}, x_3^{1}) \text{ and } d_1(x_1^{1}, x_2^{1}) = d_1(x_2^{1}, x_3^{1});\\
d_1(x_1^{1}, x_2^{1}) &= d_2(x_1^{2}, x_3^{2}) \text{ and } d_2(x_1^{2}, x_2^{2}) = d_2(x_2^{2}, x_3^{2})\\
\ldots & \ldots\\
d_{n-1}(x_1^{n-1}, x_2^{n-1}) &= d_{n}(x_1^{n}, x_3^{n}) \text{ and } d_{n}(x_1^{n}, x_2^{n}) = d_{n}(x_2^{n}, x_3^{n}) = t.
\end{split}
\end{equation}
Since all spaces \((X, d) \in U\) are ultrametric, from~\eqref{p4.5:e2} we obtain
\[
s \leq d_1(x_1^{1}, x_2^{1}) \leq d_2(x_1^{2}, x_2^{2}) \leq \ldots \leq d_{n}(x_1^{n}, x_2^{n}) = t.
\]
Hence, \(s \preccurlyeq_U t\) implies \(s \leq t\) if \(s \neq t\). We evidently also have \(\Delta_{\Ran_U} \subseteq {\leq}\). Consequently, the inclusion
\begin{equation}\label{p4.5:e3}
{\preccurlyeq_U} \subseteq {\leq}
\end{equation}
holds. Since \(\leq\) is antisymmetric, from \eqref{p4.5:e3} it follows \({\preccurlyeq_U}\) is also antisymmetric. Thus, \((\Ran_U, {\preccurlyeq_U})\) is a poset.

To complete the proof it suffices to show
\[
0 \in \Ran_U \text{ and } \<0, t> \in G_U
\]
for every \(t \in \Ran_U\).

Let \(t \in \Ran_U\). Then there are \((X, d) \in U\) and \(x\), \(y \in X\) satisfying \(t = d(x, y)\). The equality \(d(x, x) = 0\) implies \(0 \in \Ran_U\). It is clear that \eqref{d4.3:e1} holds with
\begin{equation}\label{p4.5:e4}
s = 0, \quad x_1 = x = x_3, \quad x_2 = y, \quad \text{and} \quad t = d(x, y).
\end{equation}
The statement \(\<0, t> \in G_U\) follows.
\end{proof}

Let \((X, {\preccurlyeq_X})\) and \((Y, {\preccurlyeq_Y})\) be posets. A mapping \(\Psi\colon X \to Y\) is \emph{isotone} if the implication
\[
(x \preccurlyeq_X y) \Rightarrow (\Psi(x) \preccurlyeq_Y \Psi(y))
\]
is valid for all \(x\), \(y \in X\). 

For posets \((X, {\preccurlyeq_X})\) and \((Y, {\preccurlyeq_Y})\) containing the least elements \(x_0 \in X\) and \(y_0 \in Y\), respectively, a mapping \(\Phi\colon X \to Y\) is \emph{amenable} if \(\Phi^{-1}(y_0) = \{x_0\}\) holds.

\begin{example}\label{ex5.5}
A function \(f \colon \mathbb{R}^{+} \to \mathbb{R}^{+}\) is amenable and increasing if and only if it is an amenable, isotone mapping from the poset \((\mathbb{R}^{+}, {\leq})\) to itself.
\end{example}

The next theorem can be considered as one of the main results of the paper.

\begin{theorem}\label{t5.6}
Let \(U\) be a non-empty class of non-empty ultrametric spaces \((X, d)\). Then the following conditions are equivalent for every \(f \colon \mathbb{R}^{+} \to \mathbb{R}^{+}\):
\begin{enumerate}
\item\label{t5.6:c1} The restriction \(f|_{\Ran_U}\) is an amenable, isotone mapping from the poset \((\Ran_U, {\preccurlyeq_U})\) in the poset \((\mathbb{R}^{+}, \leq)\).
\item\label{t5.6:c2} The function \(f \circ d\) is an ultrametric on \(X\) for every \((X, d) \in U\).
\end{enumerate}
\end{theorem}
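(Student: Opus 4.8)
The plan is to prove the two implications separately, using the relation \(G_U\) as a bridge. The guiding observation is that \(\<s, t> \in G_U\) records exactly an isosceles configuration in some space of \(U\), with base \(s = d(x_1, x_3)\) and equal legs \(t = d(x_1, x_2) = d(x_2, x_3)\); thus the single inequality \(f(s) \le f(t)\) is nothing other than the strong triangle inequality for \(f \circ d\) applied to the triple \(x_1, x_2, x_3\). From this angle, isotonicity of \(f|_{\Ran_U}\) with respect to \(\preccurlyeq_U\) and the strong triangle inequality for all the \(f \circ d\) should turn out to be two ways of saying the same thing, and the role of the transitive closure and the diagonal in \({\preccurlyeq_U} = G_U^{t} \cup \Delta_{\Ran_U}\) is merely to package this generating relation into an honest partial order.

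For \ref{t5.6:c2}\,\(\Rightarrow\)\,\ref{t5.6:c1}, I would first dispatch amenability. Picking any \((X, d) \in U\) and \(x \in X\) (using non-emptiness of \(U\) and of its members), the equality \(d(x,x) = 0\) forces \(f(0) = (f \circ d)(x,x) = 0\); conversely, if \(t \in \Ran_U\) with \(t \ne 0\), choose \((X, d) \in U\) and \(x \ne y\) with \(d(x,y) = t\), and positive-definiteness of the metric \(f \circ d\) gives \(f(t) > 0\). Hence \(f|_{\Ran_U}^{-1}(0) = \{0\}\). For isotonicity the case \(\<s, t> \in \Delta_{\Ran_U}\) is trivial, so it suffices to treat \(\<s, t> \in G_U^{t}\). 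I would prove by induction on \(n\) that \(\<s, t> \in G_U^{n}\) implies \(f(s) \le f(t)\): for \(n = 1\) this is the strong triangle inequality for \(f \circ d\) on the triple realizing \(\<s, t> \in G_U\), and for the inductive step I would split \(G_U^{n+1} = G_U^{n} \circ G_U\) to produce an intermediate \(z\) with \(f(s) \le f(z) \le f(t)\).

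For \ref{t5.6:c1}\,\(\Rightarrow\)\,\ref{t5.6:c2}, fix \((X, d) \in U\). Amenability of \(f|_{\Ran_U}\) makes \(f \circ d\) positive-definite exactly as above, symmetry is automatic, and the only real point is the strong triangle inequality. Here I would invoke the isosceles property of ultrametric spaces: for \(x_1, x_2, x_3 \in X\) the two largest of \(a = d(x_1, x_2)\), \(b = d(x_2, x_3)\), \(c = d(x_1, x_3)\) coincide. If \(c\) is among the two largest, then \(f(c)\) literally equals \(f(a)\) or \(f(b)\), so \(f(c) \le \max\{f(a), f(b)\}\); in the remaining case \(a = b \ge c\), whence \(\<c, a> \in G_U \subseteq {\preccurlyeq_U}\) and isotonicity yields \(f(c) \le f(a) = \max\{f(a), f(b)\}\).

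The main obstacle to watch is the temptation to argue \(c \le \max\{a,b\}\) directly through the numeric order: although \({\preccurlyeq_U} \subseteq {\leq}\) by the proof of Proposition~\ref{p4.5}, the reverse inclusion can fail, so \(f\) need not be increasing for \(\leq\) on \(\Ran_U\) (Example~\ref{ex4.1} is precisely such a situation). The correct bridge is therefore the isosceles structure encoded in \(G_U\), and the one genuinely non-formal step is the base case of each induction, namely the translation between a single generating inequality \(f(s) \le f(t)\) and one instance of the strong triangle inequality.
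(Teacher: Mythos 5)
Your proposal is correct and takes essentially the same approach as the paper: amenability is obtained from positive definiteness of \(f \circ d\), isotonicity is reduced to the generating relation \(G_U\) (your induction on \(n\) via \(G_U^{n+1} = G_U^{n} \circ G_U\) is just a repackaging of the paper's explicit chain of triples \((X_i, d_i)\)), and the converse direction uses the isosceles property of ultrametric triangles to convert the single inequality \(f(s) \le f(t)\) for \(\langle s, t\rangle \in G_U\) into the strong triangle inequality, exactly as in the paper's permutation argument. Your explicit two-case analysis of which side is the base, and your warning that \({\preccurlyeq_U} \subseteq {\leq}\) need not reverse, are sound but only presentational refinements of the published proof.
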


\begin{proof}
\(\ref{t5.6:c1} \Rightarrow \ref{t5.6:c2}\). Let \ref{t5.6:c1} hold and let \((X, d) \in U\) be given. To prove that \(f \circ d\) is an ultrametric it suffices to show that the strong triangle inequality
\begin{equation}\label{t4.6:e1}
f(d(x, y)) \leq \max\{f(d(x,z)), f(d(z,y))\}
\end{equation}
holds for all \(x\), \(y\), \(z \in X\). Since \((X, d)\) is an ultrametric space, there is a permutation
\(
\left(
\begin{smallmatrix}
x & y & z \\
x_1 & x_2 & x_3
\end{smallmatrix}
\right)
\)
such that \(d(x_1, x_2) = d(x_2, x_3)\) holds. Consequently, the ordered pair \(\<s, t>\) with 
\begin{equation}\label{t4.6:e2}
s = d(x_1, x_3) \quad \text{and} \quad t = d(x_1, x_2) = d(x_2, x_3)
\end{equation}
belongs to binary relation \(G_U\) (see Definition~\ref{d4.3}). It was proved in Proposition~\ref{p4.5} that the binary relation \({\preccurlyeq_U} = G_U^{t} \cup \Delta_{\Ran_U}\) is a partial order on \(\Ran_U\). By condition~\ref{t5.6:c1}, the restriction \(f|_{\Ran_U}\) is an amenable and isotone mapping from \((\Ran_U, {\preccurlyeq_U})\) to \((\mathbb{R}^{+}, {\leq})\). Hence, we have the inequality
\[
f(s) \leq f(t).
\]
The last inequality and \eqref{t4.6:e2} imply
\begin{equation}\label{t4.6:e3}
f(d(x_1, x_3)) \leq f(d(x_1, x_2)) = f(d(x_2, x_3)).
\end{equation}
Since \((x_1, x_2, x_3)\) is a permutation of \((x, y, z)\), inequality~\eqref{t4.6:e1} follows from \eqref{t4.6:e3}.

\(\ref{t5.6:c2} \Rightarrow \ref{t5.6:c1}\). Let \ref{t5.6:c2} hold. We must prove that \(f|_{\Ran_U}\) is amenable and isotone as a mapping from \((\Ran_U, {\preccurlyeq_U})\) to \((\mathbb{R}^{+}, {\leq})\). By Proposition~\ref{p4.5}, the number \(0\) is the least element of \((\Ran_U, {\preccurlyeq_U})\). Since there is a non-empty space \((X, d)\), the equality \(d(x, x) = 0\) holds for some \(x \in X\). By condition \ref{t5.6:c2}, \(f \circ d\) is an ultrametric on \(X\). Hence, \(f(0) = f(d(x,x)) = 0\) holds. 

Let \(t\) be a non-zero element of \(\Ran_U\). Then there is \((Y, \rho) \in U\) such that \(\rho(y_1, y_2) = t\) for some distinct \(y_1\), \(y_2 \in Y\). Since \(f \circ \rho\) is an ultrametric on \(Y\), we have 
\[
f(t) = f(\rho(y_1, y_2)) \neq 0.
\]
Hence, \(f(t) \neq 0 = f(0)\) for every \(t \in \Ran_U\). Thus,
\begin{equation}\label{t4.6:e4}
f^{-1}(0) = \{0\}
\end{equation}
holds.

To complete the proof, it is enough to verify the truth of the implication
\begin{equation}\label{t4.6:e5}
(s \preccurlyeq_U t) \Rightarrow (f(s) \leq f(t))
\end{equation}
for all \(s\), \(t \in \Ran_U\). This is trivial if \(s = t\). For \(s = 0\) the validity of \eqref{t4.6:e5} follows from \eqref{t4.6:e4}. 

Let \(s \preccurlyeq_U t\) and \(s \neq t\) hold. Then, using \eqref{e3.2}, \eqref{d4.3:e1} and \eqref{p4.5:e1}, we can find a positive integer \(n\) such that \((X_i, d_i) \in U\) and \(x_1^{i}\), \(x_2^{i}\), \(x_3^{i} \in X_i\) for \(i = 1, \ldots, n\), and
\begin{align*}
s &= d_1(x_1^{1}, x_3^{1}) \text{ and } d_1(x_1^{1}, x_2^{1}) = d_1(x_2^{1}, x_3^{1});\\
d_1(x_1^{1}, x_2^{1}) &= d_2(x_1^{2}, x_3^{2}) \text{ and } d_2(x_1^{2}, x_2^{2}) = d_2(x_2^{2}, x_3^{2})\\
\ldots & \ldots\\
d_{n-1}(x_1^{n-1}, x_2^{n-1}) &= d_{n}(x_1^{n}, x_3^{n}) \text{ and } d_{n}(x_1^{n}, x_2^{n}) = d_{n}(x_2^{n}, x_3^{n}) = t
\end{align*}
(see~\eqref{p4.5:e2}). Consequently, we have
\begin{align*}
f(s) &= f(d_1(x_1^{1}, x_3^{1})) \text{ and } f(d_1(x_1^{1}, x_2^{1})) = f(d_1(x_2^{1}, x_3^{1}));\\
f(d_1(x_1^{1}, x_2^{1})) &= f(d_2(x_1^{2}, x_3^{2})) \text{ and } f(d_2(x_1^{2}, x_2^{2})) = f(d_2(x_2^{2}, x_3^{2}))\\
\ldots & \ldots\\
f(d_{n-1}(x_1^{n-1}, x_2^{n-1})) &= f(d_{n}(x_1^{n}, x_3^{n})) \text{ and } f(d_{n}(x_1^{n}, x_2^{n})) = f(d_{n}(x_2^{n}, x_3^{n})) = f(t).
\end{align*}
Since every \(f \circ d_i\), \(i = 1, \ldots, n\), is an ultrametric, we obtain
\[
f(s) \leq f(d_1(x_1^{1}, x_2^{1})) \leq f(d_2(x_1^{2}, x_2^{2})) \leq \ldots \leq f(d_{n}(x_1^{n}, x_2^{n})) = f(t).
\]
Hence, \(s \preccurlyeq_U t\) implies \(s \leq t\) if \(s \neq t\). Implication \eqref{t4.6:e5} is valid for all \(s\), \(t \in \Ran_U\).
\end{proof}

\begin{corollary}\label{c5.7}
Let \(U_1\) and \(U_2\) be non-empty classes of non-empty ultrametric spaces. Then the following conditions are equivalent:
\begin{enumerate}
\item The equalities 
\[
\Ran_{U_1} = \Ran_{U_2} \quad \text{and} \quad {\preccurlyeq_{U_1}} = {\preccurlyeq_{U_2}}
\]
hold.
\item For every \(f \colon \mathbb{R}^{+} \to \mathbb{R}^{+}\), a function \(f \circ d_1\) is an ultrametric on \(X_1\) for every \((X_1, d_1) \in U_1\) if and only if \(f \circ d_2\) is an ultrametric on \(X_2\) for every \((X_2, d_2) \in U_2\).
\end{enumerate}
\end{corollary}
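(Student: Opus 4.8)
The plan is to reduce the whole statement to Theorem~\ref{t5.6}. For a non-empty class $V$ of non-empty ultrametric spaces and a function $f\colon\mathbb{R}^+\to\mathbb{R}^+$, write $P(V,f)$ for the property that $f\circ d$ is an ultrametric on $X$ for every $(X,d)\in V$. Theorem~\ref{t5.6} says precisely that $P(V,f)$ holds if and only if $f|_{\Ran_V}$ is an amenable, isotone map from the poset $(\Ran_V,\preccurlyeq_V)$ to $(\mathbb{R}^+,\leq)$, and condition (2) of the corollary is exactly the assertion that $P(U_1,f)\Leftrightarrow P(U_2,f)$ for every $f$. The implication (1)$\Rightarrow$(2) is then immediate: if $\Ran_{U_1}=\Ran_{U_2}$ and $\preccurlyeq_{U_1}={\preccurlyeq_{U_2}}$, the two posets coincide, so by Theorem~\ref{t5.6} the conditions $P(U_1,f)$ and $P(U_2,f)$ are literally the same requirement on $f$.

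The content is the converse (2)$\Rightarrow$(1), and I would split it into recovering the set $\Ran_U$ and then the order $\preccurlyeq_U$ from the predicate $P(U,\cdot)$. For the set, fix $t_0>0$ and take the test function $f_{t_0}$ with $f_{t_0}(0)=f_{t_0}(t_0)=0$ and $f_{t_0}\equiv 1$ elsewhere. If $t_0\in\Ran_U$ then $f_{t_0}|_{\Ran_U}$ fails amenability, since it vanishes at the nonzero point $t_0$, so $P(U,f_{t_0})$ is false; if $t_0\notin\Ran_U$ then $f_{t_0}|_{\Ran_U}$ vanishes only at $0$ and is visibly isotone, so $P(U,f_{t_0})$ is true. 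Hence $t_0\in\Ran_U$ if and only if $P(U,f_{t_0})$ fails, and since (2) makes $P(U_1,f_{t_0})$ and $P(U_2,f_{t_0})$ equivalent for every $t_0$, together with $0\in\Ran_{U_1}\cap\Ran_{U_2}$ from Proposition~\ref{p4.5} this yields $\Ran_{U_1}=\Ran_{U_2}=:\Ran$.

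For the order I would isolate the order-theoretic core: two partial orders $\preccurlyeq_{U_1}$, $\preccurlyeq_{U_2}$ on the common set $\Ran$, each with $0$ as least element, that admit the same amenable isotone maps into $(\mathbb{R}^+,\leq)$ must coincide. By symmetry it suffices to show $x\not\preccurlyeq_{U_2}y\Rightarrow x\not\preccurlyeq_{U_1}y$, and the cases $x=y$, $x=0$, $y=0$ are dispatched directly from Proposition~\ref{p4.5} (reflexivity, the least-element property, and $x\preccurlyeq_{U_i}0\Rightarrow x=0$). For distinct nonzero $x,y$ with $x\not\preccurlyeq_{U_2}y$, define $g$ on $\Ran$ by $g(0)=0$, by $g(z)=2$ when $z\neq0$ and $x\preccurlyeq_{U_2}z$, and by $g(z)=1$ otherwise. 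Using reflexivity and transitivity of $\preccurlyeq_{U_2}$ one checks that $g$ is amenable and isotone for $\preccurlyeq_{U_2}$, so $P(U_2,g)$ holds; by (2) then $P(U_1,g)$ holds, i.e.\ $g$ is isotone for $\preccurlyeq_{U_1}$. But $g(x)=2>1=g(y)$, which forbids $x\preccurlyeq_{U_1}y$. This gives $\preccurlyeq_{U_1}={\preccurlyeq_{U_2}}$ and completes (2)$\Rightarrow$(1).

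The main obstacle is the order-recovery step, and specifically the amenability constraint built into Theorem~\ref{t5.6}. The textbook fact that monotone maps into a two-element chain separate a partial order uses the indicator $z\mapsto[x\preccurlyeq z]$, which is \emph{not} amenable, as it vanishes off the principal filter of $x$. The fix is to lift that indicator to the values $\{1,2\}$ on the nonzero part of $\Ran$ while reserving the value $0$ solely for the point $0$, as in the definition of $g$; verifying that this three-valued $g$ stays isotone is exactly where reflexivity and transitivity of the order get used. A secondary point to keep in mind is that $P(U,\cdot)$ constrains only $f|_{\Ran_U}$, so the values of $f_{t_0}$ and $g$ outside $\Ran$ are irrelevant and may be taken to be $1$, and the boundary cases of the order comparison must be handled separately through Proposition~\ref{p4.5}.
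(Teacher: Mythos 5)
Your proposal is correct and follows the paper's intended route: the paper states Corollary~\ref{c5.7} without proof, as a direct consequence of Theorem~\ref{t5.6}, and your reduction of both conditions to the amenable-isotone characterization is exactly that reduction, with \((i)\Rightarrow(ii)\) immediate. The only content the paper leaves implicit is \((ii)\Rightarrow(i)\), and your separating functions --- the two-valued \(f_{t_0}\) recovering \(\Ran_{U}\), and the three-valued \(g\) with the value \(0\) reserved for the point \(0\) so that amenability survives, recovering \({\preccurlyeq_U}\) --- fill that direction in correctly, including the boundary cases handled through Proposition~\ref{p4.5}.
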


Choosing various sub-classes \(U\) of the class of all metric spaces, it is easy to obtain a number of statements as a consequence of Theorem~\ref{t5.6}.

\begin{itemize}
\item Theorem~\ref{t2.4} simply follows from Theorem~\ref{t5.6} if \(U\) equals to the class of all non-empty ultrametric spaces.
\item Proposition~\ref{p3.6} follows from Theorem~\ref{t5.6} with \(U = \{(\mathbb{Q}_p, d_p)\}\), but we need Lemma~\ref{l3.2} for the proof.
\item Let \((X, d)\) be the ultrametric space from Example~\ref{ex4.1} and let \(U = \{(X, d)\}\). By Theorem~\ref{t5.6}, a mapping
\[
X \times X \xrightarrow{d} \mathbb{R}^{+} \xrightarrow{g} \mathbb{R}^{+}
\]
is an ultrametric on \(X\) if and only if we have
\[
0 < g(1) \leq g(3) \quad \text{and} \quad 0 < g(2) \leq g(3).
\]
\item Statement~\ref{t3.8:s3} of Theorem~\ref{t3.8} follows from Corollary~\ref{c5.7} with \(U_1\) equals to the class of all non-empty ultrametric spaces and \(U_2\) equals to the class of all three-point ultrametric spaces.
\item Let \(U\) be the class of all two-point ultrametric spaces. Then 
\[
X \times X \xrightarrow{d} \mathbb{R}^{+} \xrightarrow{g} \mathbb{R}^{+}
\]
is an ultrametric on \(X\) for every \((X, d) \in U\) if and only if \(g \colon \mathbb{R}^{+} \to \mathbb{R}^{+}\) is amenable.
\end{itemize}

An interesting modification of Theorem~\ref{t5.6} can be obtained for the case when the poset \((\Ran_U, {\preccurlyeq_U})\) is totally ordered.

Recall that a poset \((Y, {\preccurlyeq})\) is \emph{totally ordered} if, for all \(y_1\), \(y_2 \in Y\), we have
\[
y_1 \preccurlyeq y_2 \quad \text{or } \quad y_2 \preccurlyeq y_1.
\]

Let \(U\) be a class of ultrametric spaces. In what follows we use the notation
\[
\Ran_U^{0} := \Ran_U \setminus \{0\}, \quad T_U^{0} := \sup\Ran_U^{0} \quad \text{and} \quad t_U^{0} := \inf\Ran_U^{0}.
\]

\begin{theorem}\label{t5.7}
Let \(U\) be a non-empty class of ultrametric spaces such that \(|X| \geq 2\) holds for some \((X, d) \in U\). Then the following statements are equivalent:
\begin{enumerate}
\item\label{t5.7:c1} The poset \((\Ran_U, {\preccurlyeq_U})\) is totally ordered and the implications
\begin{equation}\label{t5.7:e0}
(T_U^{0} < \infty) \Rightarrow (T_U^{0} \in \Ran_U) \quad \text{and} \quad (0 < t_U^{0}) \Rightarrow (t_U^{0} \in \Ran_U)
\end{equation}
are valid.
\item\label{t5.7:c2} The following conditions are equivalent for every \(f \colon \mathbb{R}^{+} \to \mathbb{R}^{+}\):
\begin{enumerate}
\item\label{t5.7:c2.1} \(f \circ d\) is an ultrametric on \(X\) for every \((X, d) \in {U}\).
\item\label{t5.7:c2.2} There is an ultrametric preserving \(g \colon \mathbb{R}^{+} \to \mathbb{R}^{+}\) such that
\begin{equation}\label{t5.7:e1}
f|_{\Ran_U} = g|_{\Ran_U}.
\end{equation}
\end{enumerate}
\end{enumerate}
\end{theorem}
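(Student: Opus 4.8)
The plan is to use Theorem~\ref{t5.6} to convert both halves of the internal equivalence \ref{t5.7:c2} into order-theoretic statements about $f|_{\Ran_U}$, and then to recognize that the whole theorem asserts exactly when an amenable isotone map of $(\Ran_U, {\preccurlyeq_U})$ admits an increasing amenable extension to $\mathbb{R}^{+}$. First I record the two translations I will use throughout. By Theorem~\ref{t5.6}, condition \ref{t5.7:c2.1} holds for a given $f$ if and only if $f|_{\Ran_U}$ is an amenable, isotone mapping from $(\Ran_U, {\preccurlyeq_U})$ to $(\mathbb{R}^{+}, {\leq})$; and by Theorem~\ref{t2.4}, condition \ref{t5.7:c2.2} holds if and only if $f|_{\Ran_U}$ is the restriction of some increasing, amenable function on $\mathbb{R}^{+}$. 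Using the inclusion ${\preccurlyeq_U} \subseteq {\leq}$ from the proof of Proposition~\ref{p4.5}, I note that \ref{t5.7:c2.2} always implies \ref{t5.7:c2.1}: an increasing amenable $g$ restricts to an amenable map that is isotone for ${\leq}$, hence a fortiori for the coarser relation ${\preccurlyeq_U}$. So the content of \ref{t5.7:c2} is precisely the reverse implication \ref{t5.7:c2.1}$\Rightarrow$\ref{t5.7:c2.2}, and the theorem reduces to: \emph{this extension property holds for every $f$ if and only if \ref{t5.7:c1} holds.}

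For \ref{t5.7:c1}$\Rightarrow$\ref{t5.7:c2} I would first observe that total ordering of $(\Ran_U, {\preccurlyeq_U})$, together with ${\preccurlyeq_U} \subseteq {\leq}$, forces ${\preccurlyeq_U}$ to coincide with $\leq$ restricted to $\Ran_U$; hence an amenable isotone $f|_{\Ran_U}$ is simply an amenable, $\leq$-increasing function on $\Ran_U$. It then remains to build an increasing amenable extension $g$ on $\mathbb{R}^{+}$, which I would do piecewise. On the core interval $[t_U^{0}, T_U^{0}]$ I set $g(x) = \sup\{f(r) \colon r \in \Ran_U,\ r \leq x\}$, which is $\leq$-increasing, agrees with $f$ on $\Ran_U$ since $f$ is increasing, and is strictly positive for $x > 0$. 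The two implications in \eqref{t5.7:e0} are exactly what let me close off the ends amenably: if $t_U^{0} > 0$ then $t_U^{0} \in \Ran_U$, so $f(t_U^{0}) > 0$ and I may interpolate linearly and positively on the otherwise empty gap $(0, t_U^{0})$; and if $T_U^{0} < \infty$ then $T_U^{0} \in \Ran_U$, so $f(T_U^{0})$ is finite and I may continue $g$ increasingly, e.g. by $g(x) = f(T_U^{0}) + (x - T_U^{0})$, on $(T_U^{0}, \infty)$. Checking monotonicity across the junctions and positivity for $x > 0$ is routine, after which $g$ is increasing and amenable, hence ultrametric preserving by Theorem~\ref{t2.4}, with $g|_{\Ran_U} = f|_{\Ran_U}$; this is \ref{t5.7:c2.2}.

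For the converse \ref{t5.7:c2}$\Rightarrow$\ref{t5.7:c1} I would argue by contraposition, exhibiting in each failure mode an $f$ that satisfies \ref{t5.7:c2.1} but not \ref{t5.7:c2.2}, which refutes the equivalence in \ref{t5.7:c2}. If $(\Ran_U, {\preccurlyeq_U})$ is not totally ordered, I pick ${\preccurlyeq_U}$-incomparable $s, t$ with $s < t$ in the usual order (neither is $0$, since $0$ is comparable to all) and define on $\Ran_U$ the function $f(r) = r + c$ when $s \preccurlyeq_U r$ and $f(r) = r$ otherwise, with $c > t - s$. Using that $\{r \colon s \preccurlyeq_U r\}$ is an up-set, $f|_{\Ran_U}$ is amenable and isotone, yet $f(s) = s + c > t = f(t)$ while $s < t$, so no $\leq$-increasing $g$ can agree with $f$ on $\Ran_U$. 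If instead the order is total but $0 < t_U^{0} \notin \Ran_U$, I take $f(r) = r - t_U^{0}$ on $\Ran_U^{0}$ and $f(0) = 0$; this is amenable and increasing with $\inf_{r \in \Ran_U^{0}} f(r) = 0$, so any increasing agreeing $g$ is forced to vanish on $(0, t_U^{0})$, contradicting amenability. Symmetrically, if $T_U^{0} < \infty$ but $T_U^{0} \notin \Ran_U$, I take $f(r) = 1/(T_U^{0} - r)$ on $\Ran_U^{0}$, whose values blow up toward $T_U^{0}$, so no increasing $g$ can assign a finite value beyond $T_U^{0}$. In every case $f$ (extended arbitrarily off $\Ran_U$) satisfies \ref{t5.7:c2.1} by Theorem~\ref{t5.6} while failing \ref{t5.7:c2.2}, completing the contrapositive.

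The hard part will be the extension step in \ref{t5.7:c1}$\Rightarrow$\ref{t5.7:c2}: one must see clearly that the only places where an amenable increasing function on $\Ran_U$ can obstruct an amenable increasing extension are the bottom endpoint $t_U^{0}$, where the values can decay to $0$, and the top endpoint $T_U^{0}$, where they can blow up, whereas interior accumulation points of $\Ran_U$ are harmless because they are bracketed by strictly positive and finite values of $f$. Getting this boundary bookkeeping to match exactly the two implications in \eqref{t5.7:e0} is the delicate point; the three constructions in the converse are then simply the explicit witnesses that these are the only obstructions.
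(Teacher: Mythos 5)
Your proposal is correct, and its overall skeleton matches the paper's: both directions run through Theorem~\ref{t5.6}, the forward direction first upgrades total ordering plus the inclusion \({\preccurlyeq_U} \subseteq {\leq}\) (from Proposition~\ref{p4.5}) to the equality \({\preccurlyeq_U} = ({\leq} \cap \Ran_U^{2})\) and then extends \(f|_{\Ran_U}\) by suprema, and the converse is a contraposition with three witness functions, your \(T_U^{0}\)-witness \(f(r) = 1/(T_U^{0}-r)\) being literally the paper's. The one genuinely different ingredient is your treatment of the non-totally-ordered case: where the paper invokes Lemma~\ref{l5.8}, i.e.\ Proposition~\ref{p5.9} on isotone extensions into complete lattices (an external result cited from the literature), you build an explicit elementary witness \(f(r) = r + c\) on the up-set \(\{r \colon s \preccurlyeq_U r\}\) and \(f(r) = r\) elsewhere, with \(c > t - s\); the up-set property gives isotonicity, \(0 \notin \{r \colon s \preccurlyeq_U r\}\) gives amenability, and \(f(s) > f(t)\) with \(s < t\) blocks any increasing extension, exactly as needed against Theorem~\ref{t2.4}. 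This is self-contained and avoids the lattice-extension machinery, at the cost of the reusability that Proposition~\ref{p5.9} offers; it is a fair trade, since only the two prescribed values \(p_1 < p_2\) at \(x_1\), \(x_2\) are ever used. Two smaller deviations are improvements in completeness or economy: you spell out the \(t_U^{0}\)-case witness \(f(r) = r - t_U^{0}\), which the paper dismisses with ``can be proved similarly,'' and you merge the paper's four-case definition of the extension \(f^{*}\) into a single supremum formula with monotone (linear rather than constant) end pieces on \((0, t_U^{0})\) and \((T_U^{0}, \infty)\) --- a cosmetic difference, since only monotonicity and amenability are required for Theorem~\ref{t2.4} to apply.
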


Now we state a lemma that will be used to prove Theorem~\ref{t5.7}.

\begin{lemma}\label{l5.8}
Let \((X, {\preccurlyeq_X})\) be a poset, let \(x_1\), \(x_2 \in X\) and \(p_1\), \(p_2 \in \mathbb{R}^{+}\) and let \(0 < p_1 < p_2\) hold. Write \([p_1, p_2] := \{x \in \mathbb{R}^{+} \colon p_1 \leq x \leq p_2\}\). If we have neither \(x_1 \preccurlyeq_X x_2\) nor \(x_2 \preccurlyeq_X x_1\), then there is an isotone mapping \(\Phi\) from \((X, {\preccurlyeq_X})\) to \(([p_1, p_2], {\leq})\) such that
\[
\Phi(x_1) = p_1 \quad \text{and} \quad \Phi(x_2) = p_2.
\]
\end{lemma}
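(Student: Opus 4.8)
The plan is to build $\Phi$ as a simple three-valued step function governed by the down-set of $x_1$ and the up-set of $x_2$. Concretely, set
\[
D := \{x \in X \colon x \preccurlyeq_X x_1\}, \qquad U := \{x \in X \colon x_2 \preccurlyeq_X x\},
\]
and define $\Phi \colon X \to [p_1, p_2]$ by letting $\Phi(x) = p_1$ for $x \in D$, $\Phi(x) = p_2$ for $x \in U$, and $\Phi(x) = \tfrac{1}{2}(p_1 + p_2)$ for every remaining $x$. Since all three assigned values lie in $[p_1,p_2]$, the codomain is automatically correct.

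First I would check that $\Phi$ is well defined, i.e.\ that $D$ and $U$ are disjoint. If some $x$ lay in both, then $x_2 \preccurlyeq_X x \preccurlyeq_X x_1$ would force $x_2 \preccurlyeq_X x_1$ by transitivity, contradicting the hypothesis that $x_1$ and $x_2$ are incomparable. This same incomparability, together with reflexivity, delivers the two prescribed boundary values: $x_1 \in D$ gives $\Phi(x_1) = p_1$ (and $x_1 \notin U$ automatically, as $D \cap U = \varnothing$), while $x_2 \in U$ gives $\Phi(x_2) = p_2$.

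The core of the argument is verifying that $\Phi$ is isotone, and for this I would record that $D$ is a lower set and $U$ is an upper set for $\preccurlyeq_X$: transitivity gives at once that $x \preccurlyeq_X y$ with $y \in D$ implies $x \in D$, and that $x \preccurlyeq_X y$ with $x \in U$ implies $y \in U$. Now assume $x \preccurlyeq_X y$. If $\Phi(x) = p_1$ there is nothing to prove, since $p_1$ is the least value. If $\Phi(x) = p_2$, then $x \in U$, hence $y \in U$ and $\Phi(y) = p_2$. If $\Phi(x) = \tfrac{1}{2}(p_1+p_2)$, then $x \notin D$, so $y \notin D$ (otherwise $x \in D$ as $D$ is a lower set), giving $\Phi(y) \geq \tfrac{1}{2}(p_1+p_2) = \Phi(x)$. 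In every case $\Phi(x) \leq \Phi(y)$, which is the required implication.

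I do not anticipate a genuine obstacle: once the step function is written down, the only thing that could fail is the isotone property, and that collapses entirely to the lower-set/upper-set behaviour of $D$ and $U$, which is pure transitivity. The one point that must not be overlooked is the disjointness $D \cap U = \varnothing$, as this is precisely where the incomparability hypothesis is consumed—without it the two prescribed values $p_1$ and $p_2$ could collide and no isotone separating map need exist. The particular intermediate value is irrelevant; any constant in the open interval $(p_1, p_2)$ serves equally well.
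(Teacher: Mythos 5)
Your proof is correct, but it takes a genuinely different route from the paper's. The paper does not argue directly at all: it obtains Lemma~\ref{l5.8} as a special case of Proposition~\ref{p5.9}, the isotone extension theorem for maps into complete lattices (whose proof is cited to \cite{Dov2019UMV}). In that derivation, since \(x_1\) and \(x_2\) are incomparable, the two-point set \(A = \{x_1, x_2\}\) carries the discrete induced order, so the assignment \(x_1 \mapsto p_1\), \(x_2 \mapsto p_2\) is vacuously isotone on \(A\), and because \(([p_1, p_2], {\leq})\) is a complete lattice it extends to an isotone map on all of \(X\). You instead build \(\Phi\) explicitly as a three-valued step function governed by the down-set \(D\) of \(x_1\) and the up-set \(U\) of \(x_2\); your verification is complete and uses only reflexivity and transitivity, and you correctly identify that the incomparability hypothesis is consumed precisely in the disjointness \(D \cap U = \varnothing\). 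What each approach buys: yours is elementary, constructive, and self-contained, needing no appeal to the extension literature; the paper's is a one-line derivation once Proposition~\ref{p5.9} is on hand and situates the lemma within the general theory of isotone extensions. One further compression available to you: the two-valued map equal to \(p_2\) on \(U\) and to \(p_1\) on \(X \setminus U\) is already isotone (as \(U\) is an up-set) and meets both boundary conditions, using only the half \(x_2 \not\preccurlyeq_X x_1\) of the incomparability hypothesis; your symmetric three-valued version is equally valid, and, as you note, any constant in \((p_1, p_2)\) works for the middle value.
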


This lemma is a very special case of the following

\begin{proposition}\label{p5.9}
Let \((P, {\leq_P})\) be a poset, \(A \subseteq P\), \({\leq_A} := ({\leq_P}) \cap (A \times A)\) and let \((L, {\leq_L})\) be a complete lattice. Then every isotone mapping \((A, {\leq_A}) \to (L, {\leq_L})\) has an extension to an isotone mapping \((P, {\leq_P}) \to (L, {\leq_L})\).
\end{proposition}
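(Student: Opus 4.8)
The plan is to exploit the completeness of $(L, \leq_L)$ and to build the extension by a supremum formula, which is the standard device for pushing an isotone map into a complete lattice. Writing $\phi \colon A \to L$ for the given isotone mapping, I would define $\Phi \colon P \to L$ by
\[
\Phi(x) = \sup\{\phi(a) \colon a \in A \text{ and } a \leq_P x\},
\]
the supremum being taken in $(L, \leq_L)$. Since $L$ is a complete lattice this supremum exists for every $x \in P$; in particular, when the index set $\{a \in A \colon a \leq_P x\}$ is empty, the convention $\sup \emptyset = \bigvee \emptyset$ returns the least element of $L$, which again exists by completeness. Thus $\Phi$ is a well-defined mapping $P \to L$.

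Next I would check that $\Phi$ restricts to $\phi$ on $A$. Fix $a_0 \in A$. The value $\phi(a_0)$ occurs in the set over which the supremum defining $\Phi(a_0)$ is taken, because $a_0 \leq_P a_0$. Moreover, for any $a \in A$ with $a \leq_P a_0$ the isotonicity of $\phi$ gives $\phi(a) \leq_L \phi(a_0)$. Hence $\phi(a_0)$ is the greatest element of $\{\phi(a) \colon a \in A,\ a \leq_P a_0\}$, so $\Phi(a_0) = \phi(a_0)$, and therefore $\Phi|_A = \phi$.

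Finally I would verify that $\Phi$ is isotone. If $x \leq_P y$, then transitivity of $\leq_P$ yields the inclusion $\{a \in A \colon a \leq_P x\} \subseteq \{a \in A \colon a \leq_P y\}$, and a supremum taken over a smaller set is bounded above by the supremum over the larger set, so $\Phi(x) \leq_L \Phi(y)$. This completes the construction of the desired extension.

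The construction is essentially forced, so there is no serious obstacle; the only point requiring care is the empty-index case, where completeness of $L$ is used twice — once to guarantee that arbitrary suprema exist and once to supply the bottom element $\bigvee \emptyset$ needed when no element of $A$ lies below $x$. Dually, one could instead set $\Phi(x) = \inf\{\phi(a) \colon a \in A,\ x \leq_P a\}$, using the top element $\bigwedge \emptyset$ for the empty case; either formula delivers an isotone extension, which also makes transparent that the extension is far from unique.
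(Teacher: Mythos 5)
Your proof is correct. Note, however, that the paper does not actually prove Proposition~5.9: it only remarks that variants of the statement are known and refers the reader to \cite{Dov2019UMV} for a formal proof, so there is no internal argument to compare yours against — your write-up in effect supplies the proof the paper outsources. Your construction $\Phi(x) = \sup\{\phi(a) \colon a \in A,\ a \leq_P x\}$ is the canonical one for targets that are complete lattices, and all three verification steps are sound: well-definedness uses completeness of $L$ (with $\sup\emptyset$ returning the bottom element when no point of $A$ lies below $x$); the restriction property uses reflexivity of $\leq_P$ together with the fact that $a \leq_P a_0$ with $a, a_0 \in A$ means exactly $a \leq_A a_0$, so isotonicity of $\phi$ applies (you use this implicitly — it is worth noting that it is precisely here that the hypothesis ${\leq_A} = ({\leq_P}) \cap (A \times A)$ enters, and the proposition would fail if $\leq_A$ were merely some order contained in $\leq_P$ restricted to $A$); and isotonicity of $\Phi$ follows from transitivity of $\leq_P$ and monotonicity of suprema under inclusion of index sets. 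Two small additions would sharpen the picture: your $\Phi$ is in fact the \emph{least} isotone extension of $\phi$, while the dual formula $x \mapsto \inf\{\phi(a) \colon a \in A,\ x \leq_P a\}$ gives the \emph{greatest}, which substantiates your closing non-uniqueness remark; and the argument nowhere uses antisymmetry, so it extends verbatim to preordered $P$.
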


Various variants of Proposition~\ref{p5.9} are known \cite{BG2012AU, BNS2003PRSESA, Fof1969MN, Sik1948ASPM}. The formal proof of the above formulated version of this proposition can be found in~\cite{Dov2019UMV}.

\begin{proof}[Proof of Theorem~\ref{t5.7}.]
\(\ref{t5.7:c1} \Rightarrow \ref{t5.7:c2}\). Let \ref{t5.7:c1} hold. We must prove that \ref{t5.7:c2} holds. First of all, we note that the equality
\begin{equation}\label{t5.7:e2}
{\preccurlyeq_U} = (\leq \cap \Ran_U^{2})
\end{equation}
holds. Indeed, it was shown in the proof of Proposition~\ref{p4.5} that the inclusion \({\preccurlyeq_U} \subseteq {\leq}\) holds (see formula~\eqref{p4.5:e3}). Since \({\preccurlyeq_U}\) is an order on the set \(\Ran_U\), the last inclusion implies
\begin{equation}\label{t5.7:e3}
{\preccurlyeq_U} \subseteq ({\leq} \cap \Ran_U^{2}).
\end{equation}
We claim that the converse inclusion
\begin{equation}\label{t5.7:e4}
{\preccurlyeq_U} \supseteq ({\leq} \cap \Ran_U^{2})
\end{equation}
also holds. Indeed, if \(\<x, y> \in ({\leq} \cap \Ran_U^{2})\) and \(\<x, y> \notin {\preccurlyeq_U}\) for some \(x\), \(y \in \Ran_U\), then we have neither \(x \preccurlyeq_U y\) nor \(y \preccurlyeq_U x\) contrary to the condition that \((\Ran_U, {\preccurlyeq_U})\) is totally ordered. Equality \eqref{t5.7:e2} follows from \eqref{t5.7:e3} and \eqref{t5.7:e4}. 

If \eqref{t5.7:e1} holds with an ultrametric preserving \(g \colon \mathbb{R}^{+} \to \mathbb{R}^{+}\), then \(f \circ d\) is an ultrametric for every \((X, d) \in U\), because \(\Ran(d) \subseteq \Ran_U\). Consequently, to prove the validity of \ref{t5.7:c2} it suffices to show that \(f|_{\Ran_U}\) can be extended to an ultrametric preserving function \(f^{*} \colon \mathbb{R}^{+} \to \mathbb{R}^{+}\) if \ref{t5.7:c2.1} is valid.

By Theorem~\ref{t5.6}, the function \(f\) satisfies condition \ref{t5.7:c2.1} if and only if \(f|_{\Ran_U}\) is an amenable, isotone mapping from \((\Ran_U, {\preccurlyeq_U})\) to \((\mathbb{R}^{+}, {\leq})\). This statement and equality \eqref{t5.7:e2} imply that \(f|_{\Ran_U}\) is increasing and
\begin{equation}\label{t5.7:e5}
f^{-1}(0) \cap \Ran_U = \{0\}
\end{equation}
holds.

To construct an isotone extension of the mapping \(f|_{\Ran_U}\) to an amenable, increasing \(f^{*} \colon \mathbb{R}^{+} \to \mathbb{R}^{+}\) we consider the following four possible cases: \\
\((c_1)\) \(T_U^{0} = \infty\) and \(t_U^{0} = 0\); \((c_2)\) \(T_U^{0} = \infty\) and \(t_U^{0} > 0\); \((c_3)\) \(T_U^{0} < \infty\) and \(t_U^{0} = 0\); \((c_4)\) \(T_U^{0} < \infty\) and \(t_U^{0} > 0\).

Considering the empty set \(\varnothing\) as a subset of the poset \((\mathbb{R}^{+}, {\leq})\), one can obtain \(\sup\varnothing = 0\), but \(\inf \varnothing\) does not exist. Since \(|X| \geq 2\) holds for some \((X, d) \in U\), we have \(|\Ran_U^{0}| \geq 1\). Therefore, one of cases \((c_1)\)--\((c_4)\) always takes place.

Let us define \(f^{*} \colon \mathbb{R}^{+} \to \mathbb{R}^{+}\) as
\[
f^{*}(t) = \begin{cases}
0, & \text{if } t = 0\\
\sup\{f(x) \colon x \in [0, t] \cap \Ran_U\}, & \text{if } 0 < t < \infty
\end{cases}
\]
for case \((c_1)\);
\[
f^{*}(t) = \begin{cases}
0, & \text{if } t = 0\\
f(t_U^{0}), & \text{if } t \in (0, t_U^{0})\\
\sup\{f(x) \colon x \in [t_U^{0}, t] \cap \Ran_U\}, & \text{if } t \geq t_U^{0}
\end{cases}
\]
for case \((c_2)\);
\[
f^{*}(t) = \begin{cases}
0, & \text{if } t = 0\\
\sup\{f(x) \colon x \in [0, t] \cap \Ran_U\}, & \text{if } 0 < t < \infty
\end{cases}
\]
for case \((c_3)\);
\[
f^{*}(t) = \begin{cases}
0, & \text{if } t = 0\\
f(t_U^{0}), & \text{if } t \in (0, t_U^{0})\\
\sup\{f(x) \colon x \in [t_U^{0}, t] \cap \Ran_U\}, & \text{if } t_U^{0} \leq t < T_U^{0}\\
f(T_U^{0}), & \text{if } t \geq T_U^{0}
\end{cases}
\]
for case \((c_4)\).

Using condition~\ref{t5.7:c1} and equality~\eqref{t5.7:e5} it is easy to see that \(f^{*}\) is correctly defined and amenable. Moreover, form the definition of \(f^{*}\) it follows that \(f^{*}\) is increasing. Consequently, \(f^{*}\) is ultrametric preserving by Theorem~\ref{t2.4}.

From Theorem~\ref{t5.6} and equality~\eqref{t5.7:e2} it follows that \(f(t_1) \leq f(t_2)\) holds whenever \(t_1\), \(t_2 \in \Ran_U\) and \(t_1 \leq t_2\). This fact and the definition of \(f^{*}\) imply \(f(t) = f^{*}(t)\) for every \(t \in \Ran_U\). Thus, equality~\eqref{t5.7:e1} holds with \(g = f^{*}\). Statement \ref{t5.7:c2} follows.

\(\ref{t5.7:c2} \Rightarrow \ref{t5.7:c1}\). Let \ref{t5.7:c2} hold. If the poset \((\Ran_U, {\preccurlyeq_U})\) is not totally ordered, then we can find \(x_1\), \(x_2 \in \Ran_U\) such that we have neither
\begin{equation}\label{t5.7:e6}
x_1 \preccurlyeq_U x_2 \quad \text{nor} \quad x_2 \preccurlyeq_U x_1.
\end{equation}
Since \(\Delta_{\Ran_U} \subseteq {\preccurlyeq_U}\) and \(0\) is the least element of \((\Ran_U, {\preccurlyeq_U})\), condition~\eqref{t5.7:e6} implies
\[
x_1 \neq x_2 \neq 0 \neq x_1.
\]
Let \(p_1\), \(p_2\) belong to \(\mathbb{R}^{+}\) and 
\begin{equation}\label{t5.7:e7}
0 < p_1 < p_2
\end{equation}
hold. Let \({\preccurlyeq_U^{0}}\) be the restriction of \({\preccurlyeq_U}\) on the set \(\Ran_U^{0}\). By Lemma~\ref{l5.8}, there is an isotone mapping \(f^{0}\) from the poset \((\Ran_U^{0}, {\preccurlyeq_U^{0}})\) to the poset \(([p_1, p_2], {\leq})\) such that
\begin{equation}\label{t5.7:e8}
f^{0}(x_1) = p_1 \quad \text{and} \quad f^{0}(x_2) = p_2.
\end{equation}
Without loss of generality one can assume that \(x_1 > x_2\) because condition \eqref{t5.7:e6} is symmetric in \(x_1\) and \(x_2\) and \(x_1 \neq x_2\). Let us define now a mapping \(f \colon \Ran_U \to \mathbb{R}^{+}\) by the rule
\begin{equation}\label{t5.7:e9}
f(x) = \begin{cases}
0, & \text{if } x = 0\\
f^{0}(x), & \text{if } x \in \Ran_U^{0}.
\end{cases}
\end{equation}
The point \(0\) is the least element of \((\Ran_U, {\preccurlyeq_U})\) and, at the same time, the least element of \((\mathbb{R}^{+}, {\leq})\). The last statement and the isotonicity of \(f^{0}\) imply that \(\Ran_U \xrightarrow{f} \mathbb{R}^{+}\) is also isotone. Moreover, from~\eqref{t5.7:e9} and \eqref{t5.7:e7} it follows directly that \(f\) is amenable. By Theorem~\ref{t5.6}, the function \(f \circ d\) is an ultrametric on \(X\) for every \((X, d) \in U\). Since~\ref{t5.7:c2} holds, there is an ultrametric preserving \(g \colon \mathbb{R}^{+} \to \mathbb{R}^{+}\) such that \(f|_{\Ran_U} = g|_{\Ran_U}\). Consequently, we have \(x_1 > x_2\) and
\[
g(x_1) = f(x_1) = p_1 < p_2 = f(x_2) = g(x_2)
\]
contrary to Theorem~\ref{t2.4}. Thus, \ref{t5.7:c2} implies that \((\Ran_U, {\preccurlyeq_U})\) is totally ordered.

Suppose now that the inequality \(T_U^{0} < \infty\) holds but \(T_U^{0} \notin \Ran_U\). Since \(U\) contains an ultrametric space \((X, d)\) such that \(|X| \geq 2\), we have \(T_U^{0} > 0\). Let us define a function \(f\) on the interval \([0, T_U^{0})\) as
\begin{equation}\label{t5.7:e10}
f(x) = \begin{cases}
0, & \text{if } x = 0\\
\frac{1}{T_U^{0} - x}, & \text{if }0 < x < T_U^{0}.
\end{cases}
\end{equation}
Then the restriction \(f|_{\Ran_U}\) is an amenable, isotone mapping from \((\Ran_U, {\preccurlyeq_U})\) to \((\mathbb{R}^{+}, {\leq})\). Using Theorem~\ref{t2.4}, Theorem~\ref{t5.6} and condition~\ref{t5.7:c2.1} we can find an increasing \(g \colon \mathbb{R}^{+} \to \mathbb{R}^{+}\) such that \eqref{t5.7:e1} holds. Consequently, we have 
\begin{equation}\label{t5.7:e11}
f(x) \leq g(T_U^{0})
\end{equation}
for every \(x \in \Ran_U\). From \(T_U^{0} \notin \Ran_U\) and \(T_U^{0} = \sup\Ran_U^{0}\) it follows that there is a sequence \(\{x_n\}\), \(x_n \in \Ran_U^{0}\) such that \(\lim_{n\to \infty} x_n = T_U^{0}\). Equalities~\eqref{t5.7:e10} and \eqref{t5.7:e11} give us the contradiction,
\[
\infty = \lim_{n\to \infty} x_n \leq g(T_U^{0}) < \infty.
\]
Hence, the first implication from~\eqref{t5.7:e0} is valid. The validity of the second implication from~\eqref{t5.7:e0} can be proved similarly. Statement~\ref{t5.7:c1} is proved.
\end{proof}

To formulate and prove the last result of the paper we recall a known fact from the theory of ultrametric spaces.

Let \((X, d)\) and \((Y, \rho)\) be metric spaces. The space \((X, d)\) is isometrically embeddable in \((Y, \rho)\) if there is a map \(\Phi \colon X \to Y\) such that
\[
d(x, y) = \rho(\Phi(x), \Phi(y))
\]
holds for all \(x\), \(y \in X\).

\begin{lemma}\label{l5.12}
Let \(n \geq 1\) be an integer number. A finite ultrametric space \((X, d)\) is isometrically embeddable in the \(n\)-dimen\-sional Euclidean space \(E^{n}\) if and only if \(|X| \leq n+1\) holds.
\end{lemma}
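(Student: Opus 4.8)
The plan is to reduce the entire statement to a single linear-algebra fact about the squared-distance matrix of a finite ultrametric space, and then read off both implications from it. Write $X = \{x_1, \dots, x_k\}$ and $d_{ij} = d(x_i, x_j)$. The \emph{crux} I will prove is that the quadratic form $Q(\lambda) = \sum_{i,j} \lambda_i \lambda_j\, d_{ij}^2$ is \emph{strictly negative} on the hyperplane $H = \{\lambda \in \mathbb{R}^k : \sum_i \lambda_i = 0\}$; that is, $Q(\lambda) < 0$ whenever $\lambda \in H$ and $\lambda \neq 0$. This is the conditional negative definiteness of the distance-square matrix, and it encodes everything.

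Given the crux, the two directions are short. For the forward implication, suppose $\Phi \colon X \to E^n$ is an isometric embedding with images $v_1, \dots, v_k$, and suppose for contradiction that $k \geq n+2$. Then $v_1, \dots, v_k$ are affinely dependent in $E^n$, so there is $\lambda \neq 0$ with $\sum_i \lambda_i = 0$ and $\sum_i \lambda_i v_i = 0$. The identity
\[
\Big\| \sum_i \lambda_i v_i \Big\|^2 = -\tfrac12 \sum_{i,j} \lambda_i \lambda_j \|v_i - v_j\|^2,
\]
valid for every $\lambda \in H$, then forces $Q(\lambda) = 0$, contradicting the crux; hence $k \leq n+1$. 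For the reverse implication, I will check that the crux is equivalent to positive definiteness of the $(k-1)\times(k-1)$ Gram matrix $G$ with $G_{ij} = \tfrac12(d_{1i}^2 + d_{1j}^2 - d_{ij}^2)$, via the routine computation that rewrites $\mu^{T} G \mu$ as $-\tfrac12 Q(\lambda)$ for the unique $\lambda \in H$ determined by $\mu$. A positive definite $G$ is the Gram matrix of $k-1$ independent vectors $w_2, \dots, w_k \in E^{k-1}$; putting $v_1 = 0$ and $v_i = w_i$ then realizes $X$ isometrically inside $E^{k-1} \subseteq E^n$ whenever $k \leq n+1$.

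The heart of the argument, and the main obstacle, is the crux itself, which I will prove by strong induction on $k$ using the hierarchical structure of ultrametrics. If $|X| \geq 2$ and $D$ is the diameter, the relation $x \approx y \Leftrightarrow d(x,y) < D$ is an equivalence relation (immediate from the strong triangle inequality), partitioning $X$ into blocks $X_1, \dots, X_m$ with $m \geq 2$; any two points in distinct blocks lie at distance exactly $D$, while each block has diameter $D_a < D$. Writing $\Lambda_a = \sum_{x \in X_a} \lambda_x$ and splitting $Q$ into within- and between-block contributions gives the identity
\[
Q(\lambda) = \sum_{a=1}^{m} Q_a(\lambda|_{X_a}) + D^2 \Big(\sum_i \lambda_i\Big)^2 - D^2 \sum_{a=1}^{m} \Lambda_a^2,
\]
where $Q_a$ is the squared-distance form of the block $X_a$.

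The difficulty is that $\lambda|_{X_a}$ need not satisfy $\Lambda_a = 0$, so the inductive hypothesis cannot be applied to $Q_a$ directly. I get around this by strengthening the induction, proving simultaneously that for \emph{every} $\lambda$ (not merely $\lambda \in H$) one has $Q(\lambda) \leq D^2 (\sum_i \lambda_i)^2$, alongside the strict negativity on $H$. The bound propagates: applying it blockwise gives $Q_a(\lambda|_{X_a}) \leq D_a^2 \Lambda_a^2$, and substituting into the identity above yields
\[
Q(\lambda) \leq D^2\Big(\sum_i \lambda_i\Big)^2 - \sum_{a=1}^m (D^2 - D_a^2)\,\Lambda_a^2,
\]
which already delivers the strengthened bound (as $D_a < D$) and, on $H$, the inequality $Q(\lambda) \leq 0$. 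For strictness on $H$, equality would force $\Lambda_a = 0$ and $Q_a(\lambda|_{X_a}) = 0$ for every $a$; the strict part of the inductive hypothesis, applied to each block, then forces $\lambda|_{X_a} = 0$ and hence $\lambda = 0$. The base case $k = 1$ is trivial. Carrying this two-part induction through — in particular the equality analysis that upgrades $\leq 0$ to $< 0$ — is the only real work; everything else is bookkeeping.
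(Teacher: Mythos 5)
Your proof is correct, but note that the paper never actually proves Lemma~\ref{l5.12}: it only attributes the result to Timan, to Lemin, and to Aschbacher--Baldi--Baum--Wilson, so your argument is a self-contained replacement for a citation rather than a variant of an in-paper proof. Checking it through: the relation \(x \approx y \Leftrightarrow d(x,y) < D\) (with \(D = \operatorname{diam} X\), attained by finiteness) is an equivalence by the strong triangle inequality, with \(m \geq 2\) blocks, cross-block distances exactly \(D\), and block diameters \(D_a < D\); your splitting identity is right, since the cross terms sum to \(D^2\bigl[(\sum_i \lambda_i)^2 - \sum_a \Lambda_a^2\bigr]\); and the strengthened hypothesis \(Q(\lambda) \leq D^2 (\sum_i \lambda_i)^2\) for \emph{arbitrary} \(\lambda\) is exactly the right device for the genuine obstacle you name, namely that \(\lambda|_{X_a}\) need not lie in the block hyperplane. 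The equality analysis also closes: on \(H\) the exact identity gives \(Q(\lambda) = \sum_a \bigl[Q_a(\lambda|_{X_a}) - D^2 \Lambda_a^2\bigr]\) with each bracket \(\leq (D_a^2 - D^2)\Lambda_a^2 \leq 0\), so \(Q(\lambda) = 0\) forces \(\Lambda_a = 0\) and \(Q_a(\lambda|_{X_a}) = 0\) for every \(a\), whence \(\lambda|_{X_a} = 0\) by the inductive strict part (trivially for singleton blocks). The two implications drawn from the crux are standard and correctly executed: affine dependence of \(n+2\) points in \(E^n\) together with the identity \(\bigl\|\sum_i \lambda_i v_i\bigr\|^2 = -\tfrac12 \sum_{i,j} \lambda_i\lambda_j \|v_i - v_j\|^2\) on \(H\) contradicts strictness, and \(\mu^{T} G \mu = -\tfrac12 Q(\lambda)\) makes \(G\) positive definite, realizing \(X\) in \(E^{k-1}\). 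Compared with the cited sources: Lemin's proof is a direct geometric induction constructing simplex embeddings from the same hierarchy of balls, while Aschbacher--Baldi--Baum--Wilson argue via (semi)definiteness of matrices attached to ultrametrics, which your conditional-negative-definiteness computation essentially reproves through Schoenberg's criterion; what your version buys is a short, fully self-contained treatment in which both the embeddability for \(|X| \leq n+1\) and the obstruction for \(|X| \geq n+2\) fall out of a single two-part induction.
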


This was independently proved by A.~F.~Timan \cite{Tim1975PSIM} (at least in a special case), by A.~Yu.~Le\-min \cite{Lem1985SMDa}, and by M.~Aschbacher, P.~Baldi, E.~B.~Baum, R. M. Wilson (see Theorems~1.1 and 6.7 in~\cite{ABBW1987SJADM}).

\begin{proposition}\label{p5.13}
Let \((X, d)\) be an ultrametric space with \(|X| \geq 2\). If \((X, d)\) is isometrically embeddable in the Euclidean plane \(E^{2}\), then statements~\ref{t5.7:c1} and \ref{t5.7:c2} of Theorem~\ref{t5.7} are valid with \(U = \{(X, d)\}\).
\end{proposition}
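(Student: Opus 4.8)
The plan is to reduce the whole statement to verifying condition~\ref{t5.7:c1} of Theorem~\ref{t5.7} for $U = \{(X,d)\}$. Indeed, the hypothesis of Theorem~\ref{t5.7} (namely that $|X| \geq 2$ for some space in the class) is satisfied here, and that theorem asserts the equivalence of \ref{t5.7:c1} and \ref{t5.7:c2}; hence once \ref{t5.7:c1} is established, statement \ref{t5.7:c2} follows immediately. So I would never manipulate ultrametric preserving functions directly, but instead pin down the poset $(\Ran_U, {\preccurlyeq_U})$.

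First I would bound the cardinality of $X$. Although Lemma~\ref{l5.12} is phrased for finite ultrametric spaces, its consequence transfers to the present setting: if $(X, d)$ embeds isometrically in $E^{2}$, then so does every finite subspace, and therefore by Lemma~\ref{l5.12} every finite subset of $X$ has at most $n+1 = 3$ points. This forces $|X| \leq 3$, and together with the hypothesis $|X| \geq 2$ it gives $|X| \in \{2, 3\}$. In particular $\Ran_U = \Ran(d)$ is a finite set: it equals $\{0, r\}$ for some $r > 0$ (when $|X| = 2$, or when $|X| = 3$ and $(X,d)$ is equilateral), or $\{0, r, R\}$ with $0 < r < R$ (when $|X| = 3$ has two distinct positive distances). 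Because $|X| \geq 2$ we also have $\Ran_U^{0} \neq \varnothing$.

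Now I would verify \ref{t5.7:c1}. Since $\Ran_U^{0}$ is finite and non-empty, the values $T_U^{0} = \sup\Ran_U^{0}$ and $t_U^{0} = \inf\Ran_U^{0}$ are attained, i.e.\ $T_U^{0}, t_U^{0} \in \Ran_U$; hence both implications in \eqref{t5.7:e0} hold because their conclusions are automatically true. The only substantive point is that $(\Ran_U, {\preccurlyeq_U})$ is totally ordered. By Proposition~\ref{p4.5} the number $0$ is the least element and so is comparable to everything; when $\Ran_U^{0}$ is a singleton there is nothing further to check. When $\Ran_U^{0} = \{r, R\}$ with $r < R$, I would invoke the isosceles structure of three-point ultrametric spaces: among the points $x_1, x_2, x_3$ of $X$ the two largest pairwise distances coincide, so the distances are $r, R, R$, with the apex $x_2$ satisfying $d(x_1, x_2) = d(x_2, x_3) = R$ and $d(x_1, x_3) = r$. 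By Definition~\ref{d4.3} this exhibits $\langle r, R\rangle \in G_U \subseteq {\preccurlyeq_U}$, so $r \preccurlyeq_U R$ and the poset is the chain $0 \preccurlyeq_U r \preccurlyeq_U R$. Thus $(\Ran_U, {\preccurlyeq_U})$ is totally ordered, and \ref{t5.7:c1} holds.

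The step I expect to require the most care is the cardinality bound: one must observe that Lemma~\ref{l5.12}, despite being stated for finite spaces, applies to $(X, d)$ through its finite subspaces, thereby excluding $|X| \geq 4$ and forcing $X$ to be finite. Once $|X| \in \{2, 3\}$ is secured, everything else is essentially automatic—the finiteness of $\Ran_U$ trivializes the supremum/infimum conditions, and the ultrametric isosceles property makes $\preccurlyeq_U$ a chain—after which \ref{t5.7:c2} is delivered by Theorem~\ref{t5.7}.
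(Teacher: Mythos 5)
Your proposal is correct and takes essentially the same approach as the paper, whose entire proof is the single line ``It follows directly from Lemma~\ref{l5.12}'': you simply expand that line, using the lemma (applied to finite subspaces to cover a possibly infinite \(X\)) to force \(|X| \leq 3\), then verifying condition~\ref{t5.7:c1} via the finiteness of \(\Ran(d)\) and the ultrametric isosceles property, after which Theorem~\ref{t5.7} delivers condition~\ref{t5.7:c2}. The finite-subspace reduction you flag as the delicate step is precisely the detail the paper leaves implicit, and you handle it correctly.
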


It follows directly from Lemma~\ref{l5.12}.

\begin{remark}
Example~\ref{ex4.1} shows that we cannot replace the plane \(E^{2}\) by other Euclidean space \(E^{n}\) with \(n \geq 3\) in Proposition~\ref{p5.13}.
\end{remark}

It would be interesting to find analogues of Theorem~\ref{t5.6} and Theorem~\ref{t5.7} for functions which preserve the metrics from an arbitrary fixed class.


\end{document}